\documentclass[a4paper,reqno]{amsart}

\usepackage[british]{babel}

\usepackage{mlmodern}
\DeclareFontFamily{OMX}{mlmex}{}
\DeclareFontShape{OMX}{mlmex}{m}{n}{<->mlmex10}{}
\usepackage[T1]{fontenc}
\usepackage[babel,stretch=10,shrink=10,verbose=errors,selected]{microtype}
\usepackage[strict,english=british]{csquotes}

\usepackage{amsmath,amssymb,amsthm}
\usepackage{mathtools}
\usepackage[
backend=biber,
style=numeric-comp,
sorting=nyt,
sortcites=true,
maxnames=5,
giveninits=true,
date=year,
useprefix=true,
]{biblatex}
\DeclareFieldFormat{eid}{no.~#1}
\usepackage{enumitem}

\usepackage{tikz}
\usetikzlibrary{cd}

\usepackage{xcolor}
\usepackage[nobiblatex]{xurl}
\usepackage[breaklinks=true,pdflang=en-GB,colorlinks=true]{hyperref}
\colorlet{citecolor}{green!75!black}
\colorlet{linkcolor}{red!75!black}
\colorlet{urlcolor}{blue!75!black}
\hypersetup{citecolor=citecolor,linkcolor=linkcolor,urlcolor=urlcolor}
\usepackage{zref-clever}
\zcsetup{cap,nameinlink=false}
\usepackage{keytheorems}

\usepackage{orcidlink}
\usepackage{ellipsis}

\numberwithin{equation}{section}

\newkeytheorem{theorem}[parent=section,style=plain]
\newkeytheorem{proposition}[sibling=theorem,style=plain]
\newkeytheorem{lemma}[sibling=theorem,style=plain]
\newkeytheorem{corollary}[sibling=theorem,style=plain]

\newkeytheorem{definition}[sibling=theorem,style=definition]
\newkeytheorem{example}[sibling=theorem,style=definition]

\newkeytheorem{remark}[sibling=theorem,style=remark]

\newcommand{\set}[2]{\ensuremath{\{\,{#1}\mid{#2}\,\}}}
\newcommand{\card}[1]{\ensuremath{\left|{#1}\right|}}
\newcommand{\dcs}[3]{\ensuremath{{#1}\backslash{#2}/{#3}}}
\newcommand{\aff}[3]{\ensuremath{{#1}\backslash_{#2}{#3}}}
\newcommand{\id}{\ensuremath{\mathrm{id}}}

\newcommand{\NN}{\mathbb{N}}
\newcommand{\ZZ}{\mathbb{Z}}

\DeclareMathOperator{\im}{im}
\DeclareMathOperator{\GL}{GL}

\DeclareMathOperator{\Aff}{Aff}
\DeclareMathOperator{\Aut}{Aut}
\DeclareMathOperator{\Hom}{Hom}
\DeclareMathOperator{\Coin}{Coin}

\newcommand{\normalsub}{\trianglelefteq}

\newcommand{\ind}[2]{\ensuremath{[#1\mathbin{:}#2]}}
\newcommand{\R}{\mathcal{R}}

\title{A Hirsch length inequality}
\author[S. Tertooy]{Sam Tertooy\ \orcidlink{0000-0002-5750-9153}}
\date{\today}
\address{KU Leuven, Kulak Kortrijk Campus\\
    E.~Sabbelaan 53\\
    8500 Kortrijk\\
    Belgium}
\email{\href{mailto:sam.tertooy@kuleuven.be}{sam.tertooy@kuleuven.be}}
\urladdr{\url{https://stertooy.github.io}}

\subjclass[2020]{Primary: 20F19; Secondary: 20E07, 20E45}
\keywords{Hirsch length, virtually polycyclic group, twisted conjugacy}

\begin{document}
    
    \begin{abstract}
        Let \(H\) and \(K\) be subgroups of a virtually polycyclic group \(G\). We prove the Hirsch length inequality
        \[
        h(H)+h(K) \leq h(H\cap K)+h(G).
        \]
        We show that equality holds when the number of \((H,K)\)-double cosets is finite, and that the converse holds when \(G\) is nilpotent. We also apply this to twisted conjugacy, showing that for homomorphisms \(\varphi,\psi \colon G \to H\) with \(G\) and \(H\) virtually polycyclic, there is a connection between the Hirsch lengths of \(G\), \(H\), and the coincidence subgroup \(\Coin(\varphi,\psi)\), and the finiteness of the Reidemeister number \(R(\varphi,\psi)\).
    \end{abstract}
    
    \maketitle

    \section{Introduction}
    
    For finite groups, there is a well-known formula linking the sizes of two subgroups \(H\) and \(K\), their intersection \(H \cap K\), and the double coset \(HK\):
    \[ \card{H}\card{K} = \card{H \cap K}\card{HK}.\]
    For infinite (sub)groups, though, this equality just ends up stating that ``\(\infty = \infty\)''. We must then turn our attention to other ways of measuring the size of a subgroup. For example, in the family of virtually polycyclic groups, there is the Hirsch length. This is defined as the number of infinite factors in a subnormal series whose factors are either finite or infinite cyclic.
    
    If \(H\) and \(K\) are subgroups of a virtually polycyclic group \(G\) with \(H \leq N_G(K)\), such that \(HK\) is also a subgroup of \(G\), then the second isomorphism theorem readily produces an analogue of the above formula in terms of the Hirsch lengths:
    \[ h(H) + h(K) = h(H \cap K) + h(HK).\]
    But what if \(H\) is not a subgroup of \(N_G(K)\)? Then the double coset \(HK\) is (in general) not a subgroup of \(G\), so the above equality is ill-defined. We show that by replacing \(h(HK)\) with \(h(G)\), we can still produce an inequality, and that equality is closely related to the finiteness of the number of \((H,K)\)-double cosets \(\card{\dcs{H}{G}{K}}\).
    
    \begin{theorem}[manual-num=A,store=mainA]
        \label{thm:mainresultA}
        Let \(G\) be a virtually polycyclic group with subgroups \(H, K\). Then
        \[ h(H) + h(K) \leq h(H \cap K) + h(G),\]
        and moreover:
        \begin{itemize}
            \item if \(\card{\dcs{H}{G}{K}}\) is finite, then the above is an equality;
            \item if \(G\) is nilpotent and the above is an equality, then \(\card{\dcs{H}{G}{K}}\) is finite.
        \end{itemize}
    \end{theorem}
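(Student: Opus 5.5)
I would prove all three parts of Theorem~\ref{thm:mainresultA} by induction on \(h(G)\), reducing along a torsion-free abelian normal subgroup. First, some harmless reductions. The statement is invariant under passing to finite-index subgroups in the following sense: replacing \(G\) by a finite-index normal subgroup \(G_0\) and \(H,K\) by \(H\cap G_0\), \(K\cap G_0\) changes no Hirsch length. It also preserves finiteness of double cosets, because each \((H,K)\)-double coset is a finite union of \((H\cap G_0,K\cap G_0)\)-double cosets, and conversely. So we may assume \(G\) is poly-\(\ZZ\), or torsion-free nilpotent in the nilpotent case. If \(h(G)=0\), there is nothing to prove.

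Otherwise, choose a nontrivial free abelian normal subgroup \(N\cong\ZZ^n\) of \(G\), for instance the last term of the poly-\(\ZZ\) series (or of a refined central series in the nilpotent case). Write \(\bar G=G/N\), with \(\bar H=HN/N\) and \(\bar K=KN/N\). Hirsch length is additive on extensions, so
\[
h(H)=h(\bar H)+h(H\cap N),\qquad h(K)=h(\bar K)+h(K\cap N).
\]
By induction on \(\bar G\) we have \(h(\bar H)+h(\bar K)\le h(\bar H\cap\bar K)+h(\bar G)\). Now set \(L=HN\cap KN\), so that \(\bar L=\bar H\cap \bar K\), and consider the map \(H\cap K\to \bar L\). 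Its kernel is \(H\cap K\cap N\). Its image has finite index in \(\bar L\) exactly when the abelian-type double coset problem inside \(L\) is finite. I would handle this by comparing \(h(H\cap K)\) with \(h(\bar L)+h(H\cap K\cap N)\) minus a defect \(d\). The defect comes from the linear-algebra identity for the subgroups \(A=H\cap N\) and \(B=K\cap N\) of \(\ZZ^n\), namely \(h(A)+h(B)=h(A\cap B)+h(A+B)\le h(A\cap B)+n\). The key point is that \(d\) must be absorbed by the slack \(n-h(A+B)\). Concretely, take \(x\in L\) written as \(x=hn_1=kn_2\). Then \(x\) lifts to \(H\cap K\) precisely when \(n_1n_2^{-1}\) can be adjusted into \(AB\), which gives an injective "obstruction" map from \(\bar L/\overline{H\cap K}\) into \(N/AB\) (a cocycle-type map). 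Consequently \(h(\bar L)-h(\overline{H\cap K})\le n-h(AB)\). Summing the three pieces yields the inequality.

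For the equality when \(\card{\dcs{H}{G}{K}}<\infty\): finiteness passes to \(\bar G\), so equality holds there by induction. Finiteness also forces \(AB\) to have finite index in \(N\) up to the obstruction image, since the double cosets \(HnK\) for \(n\in N\) are parametrised by orbits on \(N/AB\). This makes every inequality in the chain an equality.

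For the nilpotent converse, choose \(N\) central. Then the obstruction map is a homomorphism, and equality forces \(h(\bar H)+h(\bar K)=h(\bar L)+h(\bar G)\) together with finiteness of \(N/AB\) modulo the image. Induction gives finitely many double cosets in \(\bar G\). Each fibre \(HgK\cdot N\) splits into finitely many \((H,K)\)-double cosets, because conjugating by \(g\) keeps \(N\) central, so the fibre is governed by \(N/(A\cdot {}^gB)\)-type quotients. The main obstacle is making the obstruction map precise and showing its image has the claimed Hirsch length, in particular uniformly across the fibres \(g\) in the nilpotent step, since \(g\) changes \(K\) to \(K^g\). I would first try to prove the exact formula \(h(H\cap K)=h(H\cap K\cap N)+h(\bar L)-\operatorname{rank}(\text{image})\) in the general case.
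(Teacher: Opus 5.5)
You have the same skeleton as the paper: an abelian normal $N$, the connecting map $\bar L\to N/AB$, and the chain $h(H)+h(K)=h(A)+h(B)+h(\bar H)+h(\bar K)\le\cdots$. But the step you treat as immediate is the whole difficulty, and it is a genuine gap.

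From an injective map $\bar L/\overline{H\cap K}\to N/AB$ you conclude $h(\bar L)-h(\overline{H\cap K})\le n-h(AB)$. This does not follow. The obstruction map is a derivation for the conjugation action of $\bar L$ on $N/AB$, and it is a homomorphism only if that action is trivial. So $\overline{H\cap K}$ need not be normal in $\bar L$ and the image need not be a subgroup. All you have is an injection of countable sets, which carries no Hirsch-length information.

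The bound $h(\bar L)\le h(\ker\delta)+h(N/AB)$ for such a derivation is essentially the theorem itself (cf.\ Corollary~\ref{cor:derivationmainresult}), and it needs real input. For abelian $\bar L$ the paper proves it in Proposition~\ref{prop:abelianderv}. It splits along invariant pure subgroups and, in the rationally irreducible case, uses the arithmetic bound of Theorem~\ref{thm:irredhirsch}: $h\le n-1$ for abelian irreducible subgroups of $\GL(n,\ZZ)$. For general $\bar L$ it encodes $\delta$ in $E=(N/AB)\rtimes\bar L$ via the subgroups $P,Q$. It then inducts on the nilpotency class of $G'$, after reducing to nilpotent-by-abelian $G$.

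Your induction on $h(G)$ cannot absorb this step. Since $h(E)=n-h(AB)+h(\bar L)$, one can have $h(E)=h(G)$. For instance, take $G=\ZZ^2\rtimes\ZZ$ with $N=\ZZ^2$ and $H,K$ two complements of $N$; then $E$ is just $G$ again.

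The equality part has the same hole. Finitely many affine orbits on $N/AB$ does not by itself give $h(\bar L)-h(\overline{H\cap K})\ge n-h(AB)$. The paper gets this from the congruence-counting Lemma~\ref{lem:affznorbits}, lifted through $E$ in the same way.

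When $N$ is central, $\delta$ is a homomorphism and your argument does work. So for nilpotent $G$ the inequality and the first bullet go through along your lines, but a general polycyclic group has no such central $N$.

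For the nilpotent converse, the obstacle you flag is real. The fibre over $\bar g$ concerns the pair $(H,K^g)$. For that pair the inequality only gives $h(H\cap K^g)\ge h(H\cap K)$, whereas you need equality. The paper does not resolve this directly. It quotients by the finite torsion subgroup $T$, checks that $h(\tilde H\cap\tilde K)=h(H\cap K)$, invokes \cite[Thm.~4.11]{hly23-a} for torsion-free nilpotent groups, and lifts finiteness back through $T$.

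Your finite-index reduction also fails in the upward direction that this bullet needs. Take $G=\ZZ^2\rtimes\ZZ_2$ with $\ZZ_2$ swapping coordinates, $G_0=\ZZ^2$, $H=\langle(1,0)\rangle$ and $K=\langle(0,1)\rangle$. There is a single $(H,K)$-double coset in $G_0$ but infinitely many in $G$. Passing to $G/T$ instead of a torsion-free finite-index subgroup avoids this.
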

    
    The inequality in the above \zcref[noref,nocap]{thm:mainresultA} is stated as Proposition 3 in \cite{wong00-a}. However, the proof provided there contains a gap which we discuss in \zcref{sec:twicon}. In the specific case of \(G\) being a finitely generated torsion-free nilpotent group, \zcref{thm:mainresultA} follows immediately from \cite[Thm.~4.11]{hly23-a}.
    
    This paper is structured as follows. In \zcref{sec:prelim} we recall the required definitions and properties of the Hirsch length, group derivations, and rationally irreducible modules. In \zcref{sec:abderiv} we analyse derivations between abelian groups, and in \zcref{sec:setup} we introduce a setup for the subsequent proofs that lets us leverage these derivations. In \zcref{sec:basecase} we prove \zcref{thm:mainresultA} for metabelian groups; in \zcref{sec:maincase} we prove it in full generality. Finally, in \zcref{sec:twicon} we apply our main result to twisted conjugacy.
    
    \section{Preliminaries}
    \label{sec:prelim}
    
    \subsection{Virtually polycyclic groups and Hirsch length}
    
    We recall the definitions of a (virtually) polycyclic group and of the Hirsch length, and some of their properties, following \cite{sega83-a}.
    
    \begin{definition}
        A group \(G\) is called \emph{polycyclic} if it admits a finite subnormal series
        \[ 1 = G_0 \normalsub G_1 \normalsub \cdots \normalsub G_{k-1} \normalsub G_k = G\]
        such that every factor \(G_i / G_{i-1}\) is cyclic. A group is called \emph{virtually polycyclic} if it admits a finite-index polycyclic subgroup.
    \end{definition}
    
    \begin{proposition}
        A virtually polycyclic group \(G\) admits a finite subnormal series
        \[ 1 = G_0 \normalsub G_1 \normalsub \cdots \normalsub G_{k-1} \normalsub G_k = G\]
        such that every factor \(G_i / G_{i-1}\) is either infinite cyclic or finite.
    \end{proposition}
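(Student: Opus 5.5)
We prove the proposition by taking a polycyclic subgroup of finite index, shrinking it to a normal one, using its series, and then refining. By definition \(G\) has a polycyclic subgroup \(P\) of finite index. We replace \(P\) by its normal core \(N=\bigcap_{g\in G} gPg^{-1}\). Since \(P\) has only finitely many conjugates, each of finite index, \(N\) has finite index in \(G\) and \(N\normalsub G\). Moreover, subgroups of polycyclic groups are polycyclic: intersecting a polycyclic series of \(P\) with \(N\) gives a subnormal series of \(N\). Its factors \((N\cap P_i)/(N\cap P_{i-1})\) embed into \(P_i/P_{i-1}\), so they are cyclic. Hence \(N\) is a polycyclic normal subgroup of finite index, with a series
\[1=N_0\normalsub N_1\normalsub\cdots\normalsub N_m=N\]
having cyclic factors.

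Next we refine this series so that every factor is infinite cyclic or finite. A cyclic group is either finite or infinite cyclic, so each factor \(N_i/N_{i-1}\) already has one of the two required forms. No refinement is needed.

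Finally we append \(G\): the series \(1=N_0\normalsub\cdots\normalsub N_m=N\normalsub G\) is subnormal. All factors below \(N\) are finite or infinite cyclic, and the top factor \(G/N\) is finite. This is the required series, with \(k=m+1\).

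The only point requiring real care is the first step: the normal core has finite index (the orbit of \(P\) under conjugation is finite), and subgroups of polycyclic groups remain polycyclic. Normality of \(N\) in \(G\) is what makes the final step \(N\normalsub G\) legitimate. Without it we would only have \(P\le G\), which is not a subnormal step.
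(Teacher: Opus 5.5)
Your proof is correct. Passing to the normal core \(N\) of a finite-index polycyclic subgroup, observing that \(N\) is polycyclic with cyclic (hence finite or infinite cyclic) factors, and capping the series with the finite factor \(G/N\) is the standard argument, and you rightly point out that normality of \(N\) in \(G\) is what makes the last step subnormal. The paper gives no proof of this proposition; it quotes it as background from Segal's book on polycyclic groups, so there is no proof in the paper to compare yours against.
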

    
    \begin{definition}
        The \emph{Hirsch length} \(h(G)\) of a virtually polycyclic group \(G\) is the number of infinite cyclic factors in a subnormal series with infinite cyclic or finite factors.
    \end{definition}
    
    We remark that the Hirsch length is independent of the chosen subnormal series.
    
    \begin{proposition}
        Let \(G\) be a virtually polycyclic group with a subgroup \(H\) and a normal subgroup \(N\). Then
        \begin{enumerate}
            \item \(h(H) \leq h(G)\), with equality if and only if \(\ind{G}{H} < \infty\),
            \item \(h(G) = h(N) + h(G/N)\).
        \end{enumerate}
    \end{proposition}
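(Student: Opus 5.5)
The plan is to reduce both claims to the Schreier refinement theorem (equivalently, to the independence of the Hirsch length from the chosen series). Start with a subnormal series of \(G\) with cyclic-or-finite factors:
\[ 1 = G_0 \normalsub G_1 \normalsub \cdots \normalsub G_k = G. \]
Intersecting it with \(H\) gives a subnormal series \(H\cap G_i\) of \(H\). Each factor \((H\cap G_i)/(H\cap G_{i-1})\) embeds into \(G_i/G_{i-1}\) via the map \(x(H\cap G_{i-1})\mapsto xG_{i-1}\). A subgroup of an infinite cyclic group is trivial or infinite cyclic, and a subgroup of a finite group is finite. So this intersected series again has factors that are infinite cyclic or finite, and \(h(H)\) counts its infinite cyclic factors. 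Hence \(h(H)\le h(G)\).

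In the equality case, every infinite cyclic factor of \(G\) must pull back to an infinite cyclic factor of \(H\), hence to a finite-index subgroup of that factor. Finite factors trivially have finite index. We then prove \(\ind{G}{H}<\infty\) by induction on \(i\), using
\[ \ind{G_i}{H\cap G_i}\le \ind{G_{i-1}}{H\cap G_{i-1}}\cdot \ind{G_i/G_{i-1}}{(H\cap G_i)G_{i-1}/G_{i-1}}. \]
This inequality holds because the cosets of \(H\cap G_i\) in \(G_i\) map onto the cosets of \((H\cap G_i)G_{i-1}\), and each fibre injects into the cosets of \(H\cap G_{i-1}\) in \(G_{i-1}\). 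Conversely, if \(\ind{G}{H}<\infty\), the same inequality, applied factor by factor, shows that no infinite cyclic factor can drop to a trivial factor. So equality holds iff the index is finite. The delicate point is that the index inequality must be organised to go in both directions. Counting the infinite factors directly also works, since a finite-index subgroup of \(\ZZ\) is infinite cyclic and an infinite-index one is trivial.

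For (2), take a subnormal series of \(N\) and one of \(G/N\), each with cyclic-or-finite factors. Lift the second to \(G\), getting subgroups containing \(N\), and splice it on top of the first. The result is a subnormal series of \(G\) whose factors are exactly those of the two series, by the third isomorphism theorem. Counting infinite cyclic factors gives \(h(G)=h(N)+h(G/N)\), because the count does not depend on the chosen series.

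The main obstacle is this independence of the Hirsch length from the series, which is what makes the counts well-defined. Here it is assumed as remarked just before the statement; otherwise it would follow from Schreier refinement, since refining an infinite cyclic factor yields exactly one infinite cyclic factor.
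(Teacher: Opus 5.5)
The paper does not prove this proposition. It recalls it as standard background from \cite{sega83-a}, together with the remark that the Hirsch length does not depend on the chosen series. Your argument is the usual textbook proof and it is correct: intersect a cyclic-or-finite series of \(G\) with \(H\), splice a series of \(N\) onto the lifted series of \(G/N\), and rely on independence of the series via Schreier refinement.

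The one step that needs tightening is \(\ind{G}{H}<\infty \Rightarrow h(H)=h(G)\). Your displayed inequality only bounds \(\ind{G_i}{H\cap G_i}\) from above by a product, so it cannot by itself show that the factor indices are finite. That direction needs the reverse bound
\[
\ind{G_i/G_{i-1}}{(H\cap G_i)G_{i-1}/G_{i-1}} \leq \ind{G_i}{H\cap G_i} \leq \ind{G}{H}.
\]
The first inequality comes from the coset surjection you already describe. The second comes from the injection \(x(H\cap G_i)\mapsto xH\) of \(G_i/(H\cap G_i)\) into \(G/H\).

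In fact your displayed inequality is an equality, because \(\ind{(H\cap G_i)G_{i-1}}{H\cap G_i} = \ind{G_{i-1}}{H\cap G_{i-1}}\). Stating it that way gives both directions at once.

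Alternatively, once (2) is proved this direction is immediate. The normal core \(C\) of \(H\) has finite index in \(G\), so \(h(G) = h(C) + h(G/C) = h(C) \leq h(H) \leq h(G)\).

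Finally, in (2) you tacitly use that \(N\) and \(G/N\) admit cyclic-or-finite series. For \(N\) this is your intersection argument from (1). For \(G/N\), project the series of \(G\): its factors \(G_iN/G_{i-1}N\) are quotients of \(G_i/G_{i-1}\), hence cyclic or finite.
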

    
    \subsection{Group derivations}
    
    If a group \(G\) acts on another group \(H\) in such a way that
    \[ g \cdot (h_1h_2) = (g \cdot h_1)(g \cdot h_2),\]
    we say that \(G\) acts on \(H\) \emph{via automorphisms}. This naming convention makes sense: for any such action, there is a homomorphism \(\lambda \colon G \to \Aut(H)\) such that \(g \cdot h = \lambda(g)(h)\) for all \(g \in G\) and all \(h \in H\). If \(G\) acts on an abelian group \(M\), then \(M\) is called a \emph{\(G\)-module}. 
    
    \begin{definition}
        Let \(G, H\) be groups and let \(G\) act on \(H\) via automorphisms. A \emph{(group) derivation} is a map
        \(\delta\colon G \to H\) such that
        \[ \delta(g_1g_2) = \delta(g_1)(g_1\cdot \delta(g_2))\]
        for all \(g_1,g_2 \in G\).
    \end{definition}
    
    We refer to \cite[Sec.~IV.2]{brow82-a} for a standard work on derivations with abelian codomain, and remark that most properties of derivations do not depend on their codomain being abelian.
    
    The notion of group derivation generalises that of group homomorphism. A derivation \(\delta \colon G \to H\) is a homomorphism when \(G\) acts trivially on its image. In particular, this happens when \(G\) acts trivially on \(H\).
    
    \begin{definition}
        Let \(G,H\) be groups, let \(G\) act on \(H\) via automorphisms and let \(\delta \colon G \to H\) be a derivation. We define the \emph{affine action} of \(G\) on \(H\) via \(\delta\) by
        \[ g \ast_\delta h \coloneq \delta(g)(g \cdot h).\]
        We use \(G \ast_\delta h\) to denote the orbit of \(h \in H\), and \(\aff{G}{\delta}{H}\) to denote the orbit space.
    \end{definition}
    Affine actions can be seen as a generalisation of actions via automorphisms. If the action of \(G\) on \(H\) via automorphisms corresponds to the homomorphism \(\lambda \colon G \to \Aut(H)\), and \(\delta \colon G \to H\) is a derivation, then the affine action via \(\delta\) corresponds to the homomorphism
    \[ G \to \Aff(H) = H \rtimes \Aut(H) \colon g \mapsto (\delta(g),\lambda(g)).\]
    The following \zcref[nocap,noref]{prop:derivprops} lists some properties of group derivations.
    
    \begin{proposition}
        \label{prop:derivprops}
        Let \(\delta \colon G \to H\) be a group derivation and let \(g, g_1, g_2 \in G\). Then
        \begin{enumerate}
            \item \(\delta(1_G) = 1_H\),
            \item \(\delta(g^{-1}) = g^{-1} \cdot \delta(g)^{-1}\),
            \item \(\delta(g_1) = \delta(g_2)\) if and only if \(\delta(g_1^{-1}g_2) = 1_H\),
            \item \(\ker \delta\) is a subgroup of \(G\).
        \end{enumerate}
    \end{proposition}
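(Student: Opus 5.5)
The four properties follow directly from the defining identity \(\delta(g_1g_2) = \delta(g_1)(g_1\cdot\delta(g_2))\), together with the fact that each \(g\) acts on \(H\) by an automorphism. We prove them in the order listed, since each one uses the previous.

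For (1), take \(g_1 = g_2 = 1_G\). The identity gives \(\delta(1_G) = \delta(1_G)(1_G \cdot \delta(1_G)) = \delta(1_G)\delta(1_G)\), because \(1_G\) acts trivially. Cancelling in \(H\) yields \(\delta(1_G) = 1_H\).

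For (2), take \(g_1 = g\) and \(g_2 = g^{-1}\). By (1),
\[ 1_H = \delta(1_G) = \delta(g)\,(g\cdot\delta(g^{-1})),\]
so \(g \cdot \delta(g^{-1}) = \delta(g)^{-1}\). Applying \(g^{-1}\) to both sides gives \(\delta(g^{-1}) = g^{-1}\cdot\delta(g)^{-1}\).

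For (3), write \(g_2 = g_1(g_1^{-1}g_2)\). Then
\[ \delta(g_2) = \delta(g_1)\,\bigl(g_1\cdot\delta(g_1^{-1}g_2)\bigr).\]
Hence \(\delta(g_1) = \delta(g_2)\) if and only if \(g_1\cdot\delta(g_1^{-1}g_2) = 1_H\). Since \(g_1\) acts by an automorphism, this holds if and only if \(\delta(g_1^{-1}g_2) = 1_H\).

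For (4), \(\ker\delta = \set{g\in G}{\delta(g)=1_H}\) contains \(1_G\) by (1). If \(g \in \ker\delta\), then (2) gives \(\delta(g^{-1}) = g^{-1}\cdot 1_H = 1_H\), so \(\ker\delta\) is closed under inverses. If \(g_1, g_2 \in \ker\delta\), then \(\delta(g_1g_2) = 1_H\,(g_1\cdot 1_H) = 1_H\), so it is closed under products. Alternatively, (4) follows from (3): taking \(g_1, g_2 \in \ker\delta\) gives \(\delta(g_1) = \delta(g_2) = 1_H\), hence \(g_1^{-1}g_2 \in \ker\delta\).

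None of these steps poses a real obstacle. The only point needing care is to respect the order of factors, since \(H\) need not be abelian, and to use that the action is by automorphisms, so that it preserves \(1_H\) and inverses.
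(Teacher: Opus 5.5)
Your proof is correct. It is the standard direct verification from the defining identity: you respect the order of factors in the possibly non-abelian \(H\), and you use that each \(g\) acts by an automorphism exactly where it is needed, in (2) and (3). The paper gives no proof of its own here, treating these as standard facts with a reference to Brown's book, so your argument is the routine check it leaves to the reader.
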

    
    When comparing (4) with group homomorphisms, notice that we did not state that \(\ker \delta\) is a \emph{normal} subgroup, nor did we state that \(\im \delta\) is a subgroup of \(H\). We illustrate with two examples that neither assertion holds.
    \begin{example}
        Let \(\ZZ_2\) act on \(\ZZ_3\) by inversion, which is an action via automorphisms since we are dealing with abelian groups. Let \(x\) and \(y\) be generators of \(\ZZ_2\) and \(\ZZ_3\), respectively. The group derivation \(\delta \colon \ZZ_2 \to \ZZ_3\) defined by
        \[\delta(1) = 1, \qquad \delta(x) = y\]
        has image \(\im \delta = \{1,y\}\), which is not a subgroup of \(\ZZ_3\).
    \end{example}
    
    \begin{example}
        \label{exm:derivimage}
        Let \(S_3\) act on \(\ZZ_2 \times \ZZ_2\) by permuting its non-identity elements. Fix some \(x \neq 1\) in \(\ZZ_2 \times \ZZ_2\). Then the map
        \[ \delta \colon S_3 \to \ZZ_2 \times \ZZ_2 \colon g \mapsto x(g \cdot x)\]
        is a derivation. Its kernel is then the subgroup of permutations that fix \(x\), which is isomorphic to \(\ZZ_2\), and the subgroups of order \(2\) in \(S_3\) are never normal.
    \end{example}
    \subsection{Rationally irreducible modules} To finish this section, we give a short introduction to rationally irreducible modules and pure subgroups, which will be necessary in the next section. A more elaborate treatment can be found in \cite[Ch.~2]{sega83-a} and \cite[Sec.~1.4]{dks17-a}.
    
    \begin{definition}
        A \(G\)-module \(M \cong \ZZ^n\) is called \emph{rationally irreducible} if every non-trivial \(G\)-invariant subgroup has finite index in \(M\).
    \end{definition}
    
    \begin{definition}
        A subgroup \(B\) of an abelian group \(A\) is called \emph{pure} if for every element \(a \in A\) and every integer \(n \geq 1\) such that \(a^n \in B\), there exists an element \(b \in B\) such that \(b^n = a^n\).
    \end{definition}
    
    If moreover \(A\) is torsion-free, then the condition above can be restated as ``if \(a^n \in B\) for some \(a \in A\) and \(n \geq 1\), then \(a \in B\)''.
    
    \begin{proposition}
        A \(G\)-module \(M \cong \ZZ^n\) is rationally irreducible if and only if its only \(G\)-invariant pure subgroups are the trivial subgroup and \(M\) itself.
    \end{proposition}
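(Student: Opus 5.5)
Since \(M \cong \ZZ^n\) is torsion-free, a subgroup \(B \leq M\) is pure precisely when \(M/B\) is torsion-free, by the reformulation noted just after the definition of purity. The plan is to pass between arbitrary \(G\)-invariant subgroups and \(G\)-invariant pure subgroups using the \emph{isolator}
\[ \sqrt{B} \coloneq \set{m \in M}{m^k \in B \text{ for some } k \geq 1}.\]
We then show that the two conditions in the statement fail together.

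First we record the basic properties of \(\sqrt{B}\), for \(B\) a \(G\)-invariant subgroup of \(M\).
\begin{enumerate}
\item \(\sqrt{B}\) is a subgroup, since \(M\) is abelian: if \(a^k, b^l \in B\), then \((ab^{-1})^{kl} \in B\).
\item \(\sqrt{B}\) is pure. If \(a^n \in \sqrt{B}\), then \(a^{nk} \in B\) for some \(k\), so \(a \in \sqrt{B}\).
\item \(\sqrt{B}\) is \(G\)-invariant. If \(m^k \in B\), then \((g \cdot m)^k = g \cdot m^k \in B\), because \(G\) acts via automorphisms.
\item \(\ind{\sqrt{B}}{B} < \infty\). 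The group \(\sqrt{B}/B\) is a finitely generated abelian group, being a quotient of a subgroup of \(\ZZ^n\), and every element of it is torsion, so it is finite.
\end{enumerate}

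For (\(\Rightarrow\)), assume \(M\) is rationally irreducible and let \(B\) be a \(G\)-invariant pure subgroup. If \(B \neq 1\), then \(B\) has finite index in \(M\). Hence \(M/B\) is finite, so it is torsion. Purity of \(B\) makes \(M/B\) torsion-free. Therefore \(M/B\) is trivial and \(B = M\).

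For (\(\Leftarrow\)), assume the only \(G\)-invariant pure subgroups are \(1\) and \(M\), and let \(B \neq 1\) be \(G\)-invariant. By (2) and (3), \(\sqrt{B}\) is a \(G\)-invariant pure subgroup. It contains \(B \neq 1\), so \(\sqrt{B} = M\). Then (4) gives \(\ind{M}{B} < \infty\), as required.

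The only step needing any care is (4). It uses that subgroups of \(\ZZ^n\) are finitely generated, so a torsion quotient of \(\sqrt{B}\) is finite. The rest is formal.
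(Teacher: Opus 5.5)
Your proof is correct and complete: (1)--(3) are right, (4) correctly uses that subgroups of \(\ZZ^n\) are finitely generated to conclude that the torsion group \(\sqrt{B}/B\) is finite, and both directions follow exactly as you describe. The paper gives no proof of this proposition, since it is recalled as background with pointers to the literature, so there is nothing to compare against; your isolator argument is the standard one (inside \(\mathbb{Q}\otimes_{\ZZ} M\), \(\sqrt{B}\) is just \(M\) intersected with the \(\mathbb{Q}\)-span of \(B\)).
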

    
    The following \zcref[noref,nocap]{thm:irredhirsch} is a direct consequence of \cite[Thm.~2.1]{frei71-a}.
    \begin{theorem}
        \label{thm:irredhirsch}
        Let \(G\) be an abelian subgroup of \(\GL(n,\ZZ)\). If \(\ZZ^n\) is rationally irreducible as a \(G\)-module, then \(G\) is finitely generated and \(h(G) \leq n-1\).
    \end{theorem}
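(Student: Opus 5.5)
The plan is to turn the question into one about units in a number field and then apply Dirichlet's unit theorem.

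\emph{Step 1: the algebra generated by \(G\) is a number field acting one-dimensionally.} Tensor with \(\mathbb{Q}\) and view \(G \leq \GL(n,\ZZ) \leq \GL(n,\mathbb{Q})\) acting on \(V = \mathbb{Q}^n\). Rational irreducibility of \(\ZZ^n\) means that \(V\) is an irreducible \(\mathbb{Q}[G]\)-module. Indeed, a \(G\)-invariant subspace \(W\) gives the pure \(G\)-invariant subgroup \(W \cap \ZZ^n\), which by the proposition on pure subgroups is trivial or all of \(\ZZ^n\).

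Let \(F \subseteq M_n(\mathbb{Q})\) be the \(\mathbb{Q}\)-subalgebra spanned by \(G\). It is commutative because \(G\) is abelian, and \(V\) is a faithful simple \(F\)-module. By Schur's lemma, \(D = \operatorname{End}_F(V)\) is a division algebra. Since \(F\) is commutative, \(F \subseteq D\). For nonzero \(a \in F\), the kernel of \(a\) is \(F\)-invariant, hence zero, so \(a\) is invertible and \(a^{-1}\) lies in \(F\) (a finite-dimensional algebra contains the inverses of its invertible elements). Thus \(F\) is a field of finite degree over \(\mathbb{Q}\). Since \(V\) is a simple \(F\)-module and \(F\) is a field, \(V \cong F\), so \([F:\mathbb{Q}] = n\).

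\emph{Step 2: \(G\) lies in the unit group of an order.} Put
\[ \mathcal{O} = \set{a \in F}{a\ZZ^n \subseteq \ZZ^n}. \]
This is a subring of \(F\). Fixing a nonzero \(v \in \ZZ^n\), the map \(a \mapsto av\) embeds \(\mathcal{O}\) into a lattice of \(F \cong V\). Hence \(\mathcal{O}\) is a finitely generated free \(\ZZ\)-module. It spans \(F\), since any \(a\) becomes integral after multiplying by a suitable integer. So \(\mathcal{O}\) is an order in \(F\). Every \(g \in G\) satisfies \(g^{\pm 1}\ZZ^n = \ZZ^n\), so \(G \leq \mathcal{O}^\times\).

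\emph{Step 3: apply Dirichlet's unit theorem.} For orders, the theorem gives \(\mathcal{O}^\times \cong \mu \times \ZZ^{r+s-1}\) with \(\mu\) finite, where \(r + 2s = n\). Subgroups of finitely generated abelian groups are finitely generated, so \(G\) is finitely generated. Moreover
\[ h(G) \leq h(\mathcal{O}^\times) = r+s-1 \leq n-1. \]

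The main obstacle is Step 1: correctly deducing that the commutative algebra generated by \(G\) is a field with \(V\) one-dimensional over it. Step 3 is a standard citation; the only care needed is to use the version of the unit theorem for orders rather than for the full ring of integers.
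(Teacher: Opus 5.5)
Your proof is correct. The paper gives no argument of its own here: it records the statement as a direct consequence of \cite[Thm.~2.1]{frei71-a}. So your write-up is a genuinely independent route. You show that rational irreducibility makes \(\mathbb{Q}^n\) a simple module over the commutative algebra \(F\) spanned by \(G\), so \(F\) is a number field of degree \(n\). You then place \(G\) inside the unit group of the order \(\mathcal{O}\) and finish with Dirichlet's unit theorem. Each step holds up, with two small remarks. First, passing from an invariant subspace \(W\) to \(W = 0\) or \(W = \mathbb{Q}^n\) via the pure subgroup \(W \cap \ZZ^n\) also needs the observation that every vector of \(W\) has a nonzero integer multiple in \(\ZZ^n\). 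Second, like the statement itself, your argument tacitly assumes \(n \geq 1\).

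The citation buys brevity. Your argument is self-contained modulo Dirichlet's theorem and gives slightly more: \(G\) is a group of units in an order of a degree-\(n\) number field, and \(h(G) \leq r+s-1\). This bound equals \(n-1\) only when \(F\) is totally real.

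One simplification is available. Since you only need an upper bound, the version of the unit theorem for orders is not required. Every \(g \in G\) satisfies its characteristic polynomial, which is monic with integer coefficients and constant term \(\pm 1\). Hence \(g\) is a unit of the full ring of integers \(\mathcal{O}_F\). Equivalently, \(\mathcal{O} \subseteq \mathcal{O}_F\), because \(\mathcal{O}\) is a ring that is finitely generated as a \(\ZZ\)-module. Then \(G \leq \mathcal{O}_F^\times\), and the classical unit theorem already yields both finite generation and the bound.
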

    
    \section{Derivations between abelian groups}
    \label{sec:abderiv}
    
    The main result of this section is \zcref{prop:abelianderv}, which proves an inequality involving the Hirsch lengths of the domain, codomain and kernel of a derivation between finitely generated abelian groups. Moreover, it shows that equality is attained if the associated affine action has only finitely many orbits. This result will be used to prove the metabelian case in \zcref{lem:metabeliancase}; after proving \zcref{thm:mainresultA}, we obtain a generalisation to derivations between arbitrary virtually polycyclic groups in \zcref{cor:derivationmainresult}.
    
    \begin{lemma}
        \label{lem:matrixpowers}
        Let \(M \in \ZZ^{n \times n}\), \(k \geq 1\) and \(l \geq 0\). If \(M \equiv I_n \pmod {2^k}\), then \(M^{2^{l}} \equiv I_n \pmod {2^{k+l}}\).
    \end{lemma}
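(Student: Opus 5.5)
I would prove this by induction on \(l\), so it suffices to show that a single squaring raises the power of \(2\) by one. The case \(l = 0\) is exactly the hypothesis \(M \equiv I_n \pmod{2^k}\). For the inductive step, assume \(M^{2^{l}} \equiv I_n \pmod{2^{k+l}}\) and write \(M^{2^l} = I_n + 2^{k+l}A\) with \(A \in \ZZ^{n\times n}\). Since \(I_n\) commutes with \(A\), squaring gives
\[
M^{2^{l+1}} = \bigl(I_n + 2^{k+l}A\bigr)^2 = I_n + 2^{k+l+1}A + 2^{2(k+l)}A^2 .
\]

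It then remains to check that the last term is divisible by \(2^{k+l+1}\). This amounts to the inequality \(2(k+l) \geq k+l+1\), i.e.\ \(k+l \geq 1\). That inequality holds because \(k \geq 1\), and this is the one place where the hypothesis \(k\geq 1\) is needed. Hence \(M^{2^{l+1}} \equiv I_n \pmod{2^{k+l+1}}\), which completes the induction.

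There is no real obstacle here. The only points needing care are that \(I_n\) commutes with \(A\), so the binomial expansion is valid for matrices, and that \(k \geq 1\) is used for the cross-term bound. The bound is not automatic: for \(k=0\) the statement fails, since the hypothesis is then vacuous and, for example, \(M = -I_n\) would have to satisfy \(M \equiv I_n \pmod 2\) after one squaring but not modulo \(4\) in general.
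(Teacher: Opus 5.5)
Your proof is correct and is the paper's argument: induction on \(l\), writing \(M^{2^l} = I_n + 2^{k+l}A\), squaring, and absorbing the last term because \(2(k+l) \geq k+l+1\). One correction to your closing aside: \(M = -I_n\) is not a counterexample for \(k = 0\), since \((-I_n)^{2^l} = I_n\) for every \(l \geq 1\); a matrix such as \(M = 0\) or \(M = 2I_n\), for which already \(M^2 \not\equiv I_n \pmod 2\), does show that the hypothesis \(k \geq 1\) cannot be dropped.
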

    \begin{proof}
        We prove this by induction on \(l\). If \(l = 0\) the statement is trivially true. Now suppose that \(M^{2^{l}} \equiv I_n \pmod {2^{k+l}}\), i.e.\@ \(M^{2^{l}} = I_n + 2^{k+l} X\) for some integer matrix \(X\).
        Then
        \[
        M^{2^{l+1}} = (I_n + 2^{k+l} X)^2 = I_n + 2^{k+l+1} X + 2^{2(k+l)} X^2,
        \]
        so \(M^{2^{l+1}} \equiv I_n \pmod {2^{k+l+1}}\).
    \end{proof}
    
    \begin{lemma}
        \label{lem:orbitsfi}
        Let \(G\) be a group acting on a set \(X\) such that \(\card{G\backslash X} = s < \infty\). If \(H\) is a subgroup of \(G\) with index \(r < \infty\), then \(\card{H\backslash X} \leq rs\).
    \end{lemma}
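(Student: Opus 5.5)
The plan is to decompose each \(G\)-orbit into \(H\)-orbits and count them using coset representatives of \(H\) in \(G\). Choose a transversal \(g_1, \dots, g_r\) of \(H\) in \(G\), so that \(G = \bigcup_{i=1}^r H g_i\). Pick representatives \(x_1, \dots, x_s\) of the \(G\)-orbits on \(X\), so that \(X = \bigcup_{j=1}^s G x_j\).

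Fix \(j\) and look at the single orbit
\[ G x_j = \bigcup_{i=1}^r H g_i x_j = \bigcup_{i=1}^r H (g_i x_j). \]
Each set \(H(g_i x_j)\) is one \(H\)-orbit, so \(G x_j\) is a union of at most \(r\) \(H\)-orbits. Different \(i\) may well give the same \(H\)-orbit, which only helps the bound.

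Since every \(H\)-orbit lies inside some \(G\)-orbit, \(X\) is the union of the at most \(rs\) sets \(H(g_i x_j)\). These are \(H\)-orbits, and every \(H\)-orbit of \(X\) must be one of them. Hence \(\card{H\backslash X} \leq rs\).

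The only point needing care is to use right cosets \(Hg_i\), so that \(G x = \bigcup_i H(g_i x)\) holds for a left action. With left cosets \(g_iH\) one would instead get \(\bigcup_i g_i(Hx)\), and the sets \(g_i(Hx)\) are not \(H\)-orbits in general. Once the right cosets are chosen, the argument is immediate.
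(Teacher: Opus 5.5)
Your proposal is correct and follows the paper's proof: take right coset representatives \(g_1,\dots,g_r\) of \(H\) in \(G\), and write each \(G\)-orbit \(Gx\) as the union of the \(r\) (possibly coinciding) \(H\)-orbits \(H(g_ix)\), which gives at most \(rs\) \(H\)-orbits. Your remark that right cosets are needed for a left action is also correct, and it matches the paper's choice of representatives of \(H\backslash G\).
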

    \begin{proof}
        Let \(g_1, \ldots, g_r\) be (right) coset representatives of \(H \backslash G\). For each \(x \in X\),
        \[
        G \cdot x = \set{ g \cdot x }{g \in G} = \bigcup_{i=1}^{r} \set{ h g_i  \cdot x }{h \in H} = \bigcup_{i=1}^{r} H \cdot (g_i \cdot x).
        \]
        So each \(G\)-orbit is the union of \(r\) (not necessarily distinct) \(H\)-orbits.
    \end{proof}
    
    We now combine the two preceding \zcref[noref,nocap]{lem:matrixpowers,lem:orbitsfi} to show that, for affine actions by abelian groups on \(\ZZ^n\), having finitely many orbits puts a bound on the Hirsch length of the acting group.
    
    \begin{lemma}
        \label{lem:affznorbits}
        Let \(A\) be a finitely generated abelian subgroup of \(\Aff(\ZZ^n) = \ZZ^n \rtimes \GL(n,\ZZ)\). If the number of orbits of \(\ZZ^n\) under the action of \(A\) is finite, then \(h(A) \geq n\).
    \end{lemma}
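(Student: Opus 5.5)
The plan is a counting argument modulo powers of \(2\): finitely many orbits forces the acting group to have roughly \(2^{kn}\) distinct images modulo \(2^k\), while Hirsch length \(m\) only allows about \(2^{km}\). \zcref{lem:matrixpowers} supplies the needed control on element orders. Write \(m = h(A)\) and let \(s < \infty\) be the number of \(A\)-orbits on \(\ZZ^n\). We identify an affine map \((t,M) \in \ZZ^n \rtimes \GL(n,\ZZ)\) with the integer matrix
\[
N = \begin{pmatrix} M & t \\ 0 & 1 \end{pmatrix} \in \GL(n+1,\ZZ),
\]
so that composition corresponds to matrix multiplication and the action on \(x \in \ZZ^n\) corresponds to acting on \((x,1)^T\).

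\emph{Step 1: pass to a congruence subgroup.} Reduction modulo \(4\) gives a homomorphism from \(A\) to the finite group \(\GL(n+1,\ZZ/4\ZZ)\). Its kernel \(A_0\) consists of those elements of \(A\) whose matrix satisfies \(N \equiv I_{n+1} \pmod 4\), and it has some finite index \(r\) in \(A\). By \zcref{lem:orbitsfi}, \(A_0\) has at most \(rs\) orbits on \(\ZZ^n\). Since \(A_0\) has finite index, \(h(A_0) = m\). Being finitely generated abelian, \(A_0 \cong \ZZ^m \times T\) with \(T\) finite. Fix generators \(a_1,\dots,a_m\) of the free part.

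\emph{Step 2: reduce modulo \(2^k\) and bound the image.} Fix \(k \geq 2\). The affine action of \(A_0\) on \(\ZZ^n\) descends to an action on \((\ZZ/2^k\ZZ)^n\), because the matrices are integral. Reduction \(\ZZ^n \to (\ZZ/2^k\ZZ)^n\) is surjective and sends each \(A_0\)-orbit into a single orbit of the induced action. Hence \((\ZZ/2^k\ZZ)^n\) is covered by at most \(rs\) orbits of the image \(\bar A_0\) of \(A_0\) in \(\GL(n+1,\ZZ/2^k\ZZ)\).

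To bound \(\card{\bar A_0}\), apply \zcref{lem:matrixpowers} with \(k=2\) and \(l = k-2\) to the matrix \(N_i\) of each \(a_i\). This gives \(N_i^{2^{k-2}} \equiv I \pmod{2^k}\), so each \(\bar a_i\) has order dividing \(2^{k-2}\). Since \(\bar A_0\) is abelian and generated by the \(\bar a_i\) together with the image of \(T\), we get \(\card{\bar A_0} \leq \card{T}\, 2^{(k-2)m}\). Every orbit therefore has size at most \(\card{T}\,2^{km}\).

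\emph{Step 3: compare sizes.} Counting points gives
\[
2^{kn} \leq rs\,\card{T}\,2^{km} \quad \text{for all } k \geq 2.
\]
Letting \(k \to \infty\) forces \(n \leq m = h(A)\).

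The main obstacle is Step 2: the orders of the images must grow no faster than \(2^k\) uniformly in \(k\), including the translation parts. Working with the \((n+1)\times(n+1)\) matrices handles this, since requiring \(N \equiv I \pmod 4\) (rather than only the linear part) still gives a finite-index subgroup, and \zcref{lem:matrixpowers} then controls the translations automatically.
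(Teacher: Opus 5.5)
Your proof is correct and follows essentially the same route as the paper: embed \(A\) in \(\GL(n+1,\ZZ)\), pass to a congruence subgroup, use \zcref{lem:matrixpowers} to bound the size of the image in \(\GL(n+1,\ZZ/2^k\ZZ)\), and compare this with the \(2^{kn}\) points of \((\ZZ/2^k\ZZ)^n\), which are covered by boundedly many orbits. The differences are cosmetic. The paper uses the level-\(2\) congruence subgroup and first passes to a torsion-free finite-index subgroup. You instead work at level \(4\) and absorb the torsion into a constant factor \(\card{T}\); at level \(4\) this factor is in fact trivial, though your argument does not need that.
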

    \begin{proof}
        Since \(A\) is finitely generated, it contains a finite-index torsion-free subgroup. Since this subgroup has the same Hirsch length as \(A\), and its action on \(\ZZ^n\) still has finitely many orbits (due to \zcref{lem:orbitsfi}), we can just assume \(A\) itself is torsion-free.
        
        We can embed \(A \leq \Aff(\ZZ^n)\) into \(\GL(n+1,\ZZ)\) in the usual way:
        \[ (v,M) \mapsto \begin{pmatrix}M & v \\ 0 & 1\end{pmatrix}.\]
        Now consider the projection \(\GL(n+1,\ZZ) \to \GL(n+1,\ZZ/2\ZZ)\). Let \(C\) be the intersection of its kernel with \(A\), which has finite index in \(A\), and thus its action on \(\ZZ^n\) still has finitely many orbits (\zcref{lem:orbitsfi}).
        
        For an arbitrary integer \(k \geq 1\), consider the projection \(\ZZ \to \ZZ/2^k \ZZ\). Let \(C_k\) be the image of \(C\) in \(\GL(n+1,\ZZ/2^k\ZZ)\). For any matrix \(M \in C\), note that \(M \equiv I_{n+1} \pmod 2\). Therefore by \zcref{lem:matrixpowers} \(M^{2^{k-1}} \equiv I_{n+1}\pmod {2^k}\). Let \(r \coloneq h(A)\), then \(C\) has a generating set of \(r\) elements and hence so does \(C_k\). But every element of \(C_k\) has order at most \(2^{k-1}\), so \(|C_k| \leq 2^{(k-1)r}\). If the action of \(C\) on \(\ZZ^n\) has \(s\) orbits, then the action of \(C_k\) on \((\ZZ/2^k\ZZ)^n\) has at most \(s\) orbits as well. Since every orbit can have at most \(|C_k|\) elements, we obtain
        \[ 2^{kn} = |(\ZZ/2^k\ZZ)^n| \leq s|C_k| \leq s 2^{(k-1)r},\]
        which implies that \(\log_2 s \geq k(n-r) + r\). Since \(s\) is fixed, this must hold for arbitrary \(k \in \NN\), which is only possible if \(n \leq r\). 
    \end{proof}
    
    \begin{proposition}
        \label{prop:abelianderv}
        Let \(A\) and \(B\) be finitely generated abelian groups and let \(\delta\colon A \to B\) be a group derivation. Then
        \[ h(A) \leq h(\ker \delta) + h(B).\]
        Equality is attained when the affine action via \(\delta\) has finitely many orbits.
    \end{proposition}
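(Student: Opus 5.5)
The plan is to reduce to the case where \(B\) is torsion-free, and then induct on \(h(B)\) along a chain of \(A\)-invariant pure subgroups whose successive factors are rationally irreducible. The equality statement will then follow from \zcref{lem:affznorbits}.

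\emph{Reduction to torsion-free \(B\).} Let \(T\) be the torsion subgroup of \(B\). It is characteristic, hence \(A\)-invariant. Composing \(\delta\) with \(B \to B/T\) gives a derivation \(\delta'\colon A \to B/T\), and \(\ker\delta \leq \ker\delta'\). Restricted to \(\ker\delta'\), the map \(\delta\) is a derivation into the finite group \(T\). Its kernel \(\ker\delta\) therefore has finite index in \(\ker\delta'\), because \(\delta(a_1)=\delta(a_2)\) iff \(a_1^{-1}a_2 \in \ker\delta\), so \(\ker\delta'/\ker\delta\) injects into \(T\). Hence \(h(\ker\delta)=h(\ker\delta')\) and \(h(B/T)=h(B)\). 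Moreover, affine orbits in \(B\) map onto affine orbits in \(B/T\), so finiteness of the number of orbits is preserved. So I may assume \(B\cong\ZZ^n\).

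\emph{Induction for the inequality.} If \(n=0\), then \(\delta\) is trivial and the inequality is clear. Otherwise choose a non-trivial \(A\)-invariant pure subgroup \(N\leq B\) of minimal rank; it is rationally irreducible. The composite \(\bar\delta\colon A\to B/N\) is a derivation, and \(B/N\) is again torsion-free because \(N\) is pure. Let \(A_1=\ker\bar\delta\). Then \(\delta|_{A_1}\) is a derivation \(A_1\to N\) with kernel \(\ker\delta\). By induction, \(h(A)\leq h(A_1)+h(B/N)\), so it suffices to prove the base case \(h(A_1)\leq h(\ker\delta)+h(N)\) for a rationally irreducible module \(N\cong\ZZ^m\).

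\emph{Base case.} If \(A\) acts trivially on \(N\), then \(\delta\) is a homomorphism and the claim is immediate. Otherwise, write the module additively and pick \(a\) acting non-trivially. The kernel of \(a-1\) is an \(A\)-invariant pure proper subgroup (invariant since \(A\) is abelian), so it is trivial and \(a-1\) is invertible over \(\mathbb{Q}\). Expanding \(\delta(ac)=\delta(ca)\) gives \((a-1)\delta(c)=(c-1)\delta(a)\). Setting \(v=(a-1)^{-1}\delta(a)\in\mathbb{Q}^m\), we get \(\delta(c)=(c-1)v\) for all \(c\). In particular, the subgroup \(C\) of elements acting trivially on \(N\) lies in \(\ker\delta\). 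Now \(A/C\) is an abelian subgroup of \(\GL(m,\ZZ)\) acting rationally irreducibly, so \zcref{thm:irredhirsch} gives \(h(A/C)\leq m-1\). Hence \(h(A)=h(C)+h(A/C)\leq h(\ker\delta)+m\). The main subtlety is this step: one must make \(\delta\) rationally a coboundary. A naive bound, treating the action part and the translation part separately, would give the weaker bound \(2m-1\).

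\emph{Equality.} Suppose \(B\cong\ZZ^n\) and the affine action has finitely many orbits. Let \(\rho\colon A\to\Aff(\ZZ^n)\) be the affine action homomorphism. Its kernel consists of elements acting trivially with \(\delta=0\), so it lies in \(\ker\delta\). The image \(\rho(A)\) is finitely generated abelian and has finitely many orbits, so \zcref{lem:affznorbits} gives \(h(\rho(A))\geq n\). Thus \(h(A)=h(\ker\rho)+h(\rho(A))\geq h(\ker\delta)+h(B)\), and together with the inequality this yields equality.
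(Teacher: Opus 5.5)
Your reduction to torsion-free \(B\) and your induction along an \(A\)-invariant pure subgroup match the paper. In the irreducible case the paper instead uses the invariant pure subgroup \(C_B(A)\), which must be trivial; your rational-coboundary argument works too.

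There is one small slip. After passing to \(A_1=\ker\bar\delta\), the module \(N\) need not be rationally irreducible over \(A_1\). So in the base case you must either pick \(a\in A_1\) and use \(h(A_1/(A_1\cap C))\leq h(A/C)\leq m-1\), or, as the paper does, apply the full induction hypothesis to \(\delta|_{A_1}\colon A_1\to N\) whenever \(N\neq B\).

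The genuine gap is in the equality part. From \(\ker\rho\leq\ker\delta\) you only get \(h(\ker\rho)\leq h(\ker\delta)\). Hence \(h(A)=h(\ker\rho)+h(\rho(A))\geq h(\ker\rho)+n\) does \emph{not} give \(h(A)\geq h(\ker\delta)+n\): the inequality runs the wrong way. You need the reverse inclusion \(\ker\delta\leq\ker\rho\), i.e.\ that every element of \(\ker\delta\) acts trivially on \(B\). This fails without the finiteness hypothesis: take \(\delta=0\) and \(\ZZ\) acting on \(\ZZ^2\) by a hyperbolic matrix, so \(\ker\delta=A\) but \(\ker\rho=1\). So the finite-orbit assumption must be used a second time.

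The paper supplies this step as follows.
\begin{enumerate}
\item For \(c\in\ker\delta\) and \(a\in A\), comparing \(\delta(ca)\) with \(\delta(ac)\) gives \(c\cdot\delta(a)=\delta(a)\). So \(c\) fixes \(D=\langle\delta(A)\rangle\) pointwise.
\item Finitely many affine orbits force \(\ind{B}{D}<\infty\). The paper cites a lemma of Howie. Directly: \(D\) is \(A\)-invariant and the affine action descends to the linear action on \(B/D\). If \(B/D\) were infinite, its torsion-free quotient \(\ZZ^k\) with \(k\geq1\) would have only finitely many orbits under a subgroup of \(\GL(k,\ZZ)\). That is impossible, because the largest \(d\) with \(v\in d\ZZ^k\) is a \(\GL(k,\ZZ)\)-invariant of \(v\neq0\) taking every positive integer value.
\item Since \(B\) is torsion-free, \(c\) then acts trivially on all of \(B\).
\end{enumerate}
This gives \(\ker\delta=\ker\rho\), and only then does your computation with \zcref{lem:affznorbits} yield \(h(A)\geq h(\ker\delta)+h(B)\).
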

    \begin{proof}
        Let \(T\) be the torsion subgroup of \(B\), which is characteristic, hence \(A\)-invariant. Let \(p \colon B \to B/T\) be the natural projection. The quotient \(B/T\) is an \(A\)-module, is torsion-free as a group and has Hirsch length \(h(B/T)=h(B)\). The map \(\delta_T \coloneq p \circ \delta\) is then a derivation and \(\delta(\ker \delta_T)\subseteq T\), so it is finite. Moreover, for \(a_1,a_2\in \ker \delta_T\),
        \[
        \delta(a_1)=\delta(a_2) \implies \delta(a_1^{-1}a_2)=1
        \]
        by \zcref{prop:derivprops}(3). Thus \(\delta\) induces an injection from the cosets \(\ker \delta_T/\ker\delta\) to the finite set \(T\). Hence \(\ker\delta\) has finite index in \(\ker \delta_T\), and therefore \(h(\ker \delta_T) =h(\ker\delta)\). Furthermore, we have that
        \[
        p(a \ast_\delta b) = p( \delta(a)(a \cdot b) ) = \delta_T(a)(a \cdot p(b)) = a \ast_{\delta_T} p(b),
        \]
        which shows that \(p\) induces a surjection \(\aff{A}{\delta}{B} \to \aff{A}{\delta_T}{(B/T)}\). So if the affine action via \(\delta\) has finitely many orbits, so does the affine action via \(\delta_T\). Therefore, in the remainder of this proof, we may assume that \(B\) is torsion-free, i.e.\@ \(B \cong \ZZ^n\) for some \(n \in \NN\).
        
        First, we prove the inequality \(h(A) \leq h(\ker \delta) + h(B)\). We will assume for now that \(B\) has no non-trivial proper \(A\)-invariant pure subgroup. Set \(F \coloneq C_B(A) = \set{b \in B}{a \cdot b = b \quad\forall a \in A}\). This is clearly \(A\)-invariant, and moreover it is pure: suppose that \(b \in B\) and \(m \geq 1\) are such that \(b^m \in F\), then for any \(a \in A\) we have \(a \cdot b^m = (a \cdot b)^m = b^m\). But then \(\left((a \cdot b) b^{-1}\right)^m = 1\). As \(B\) is torsion-free we conclude that \(a \cdot b = b\), i.e.\@ \(b \in F\).
        
        By our assumption, there are two possibilities: either \(F = B\) or \(F\) is trivial. If \(F = B\), then \(A\) acts trivially on \(B\) and therefore \(\delta\) is a group homomorphism. The result then follows from the first isomorphism theorem. If \(F = 1\), set \(C \coloneq \ker \rho\) where \(\rho \colon A \to \Aut(B)\). Then for all \(a \in A\) and \(c \in C\):
        \begin{align*}
            \delta(ca) &= \delta(c)(c \cdot \delta(a)) = \delta(c)\delta(a),\\
            \delta(ac) &= \delta(a)(a \cdot \delta(c)) = \delta(a)(a \cdot \delta(c)).
        \end{align*}
        Since \(A\) is abelian, \(\delta(ca) = \delta(ac)\) and hence \(a \cdot \delta(c) = \delta(c)\). Thus, \(\delta(c) \in F\), but \(F\) is trivial, so \(C \leq \ker \delta\). Then \(B\) is rationally irreducible as an \(\im \rho\)-module, so by the first isomorphism theorem and \zcref{thm:irredhirsch},
        \[ h(A) = h(\ker \rho) + h(\im \rho) \leq h(\ker \delta) + (n-1) < h(\ker \delta) + h(B).\]
        Next, we consider the general case where \(B\) may have a non-trivial proper \(A\)-invariant pure subgroup \(P\) and prove this by induction on the Hirsch length of \(B\). 
        Note that \(h(P) < h(B)\), because a pure subgroup of finite index must be the group itself. If \(h(B) \leq 1\), then every pure subgroup is either trivial or \(B\) itself, no such \(P\) exists. So assume that \(h(B) \geq 2\) and \(B\) has a non-trivial proper \(A\)-invariant pure subgroup \(P\).
        
        Then \(\bar{\delta} \colon A \to B/P\) is again a derivation and \(h(B/P) < h(B)\). Set \(C \coloneq \ker \bar{\delta}\). By induction,
        \begin{equation}
            \label{eqn:abder1}
            h(A) \leq h(C) + h(B/P).
        \end{equation}
        Moreover, the restriction \(\delta' \colon C \to P \colon c \mapsto \delta(c)\) is also a derivation and \(h(P) < h(B)\), so again by induction
        \begin{equation}
            \label{eqn:abder2}
            h(C) \leq h(\ker \delta') + h(P).
        \end{equation}
        Thus, combining \zcref[noname]{eqn:abder1,eqn:abder2}, we get
        \[ h(A) \leq h(\ker \delta') + h(P) + h(B/P) \leq h(\ker \delta) + h(B). \]
        
        Second, we prove the equality when the affine action of \(A\) on \(B\) via \(\delta\) has finitely many orbits.
        Set \(D \coloneq \langle \delta(A)\rangle\), which is \(A\)-invariant. By \cite[Lem.~4]{howi11-a} \(B/D\) must be finite and thus \(h(B)=h(D)\). Set \(C \coloneq \ker \delta\). Note that the action of \(C\) on \(D\) is trivial. Indeed,
        for all \(a \in A\) and \(c \in C\):
        \begin{align*}
            \delta(ca) &= \delta(c)(c \cdot \delta(a)) = c \cdot \delta(a),\\
            \delta(ac) &= \delta(a)(a \cdot \delta(c)) = \delta(a),
        \end{align*}
        so \(c \cdot \delta(a) = \delta(a)\). Since \(D\) has finite index in \(B\), this implies that \(c\) acts trivially on \(B\): for every \(b \in B\), there exists an \(m \geq 1\) such that \(b^m \in D\), and then
        \[
        ((c \cdot b)b^{-1})^m=(c \cdot (b^m))b^{-m}= b^m b^{-m} = 1.
        \]
        As \(B\) is torsion-free, this implies that \(c \cdot b=b\). Thus \(C\) lies in the kernel of the homomorphism
        \(
        \sigma \colon A \to \Aff(B)
        \).
        The group \(\im \sigma\) is a finitely generated abelian subgroup of \(\Aff(B)\), and its action on \(B\) has finitely many orbits. Hence \zcref{lem:affznorbits} implies \(h(\im \sigma) \geq h(B)\). Consequently,
        \[
        h(A/C) \geq h(A/\ker \sigma) = h(\im \sigma) \geq h(B),
        \]
        and therefore
        \[ h(A) = h(C) + h(A/C) \geq h(\ker \delta) + h(B). \]
        Together with the inequality proved above, this shows equality.
    \end{proof}

    \section{Default setup}
    \label{sec:setup}
    
    We construct a reduction used in the following two sections. Let \(G\) be a group with subgroups \(H,K\) and normal subgroup \(N\). Let \(p\) be the projection \(G \to G/N\); we will use a bar to denote the image of a subgroup or element under \(p\), e.g.\@ \(\bar{H} \coloneq p(H)\). We will also use a subscript \(N\) to denote the intersection with \(N\), e.g.\@ \(H_N \coloneq H \cap N\). Let \(\dcs{H}{G}{K}\) denote the set of all \((H,K)\)-double cosets in \(G\). Then we have the following exact sequence of pointed sets:
    
    \begin{equation}
        \label{eqn:exactseq}
        \begin{tikzcd}[row sep=large]
            1 \arrow[r] & H_N \cap K_N \arrow[r] & H \cap K  \arrow[r] & \bar{H} \cap \bar{K}
            \arrow[dll,out=0, in=180, looseness=2,overlay,swap,"\delta"] & \\
            & \dcs{H_N}{N}{K_N} \arrow[r]   &  \dcs{H}{G}{K} \arrow[r] & \dcs{\bar{H}}{\bar{G}}{\bar{K}} \arrow[r] & 1
        \end{tikzcd}
    \end{equation}
    
    All of the maps in this sequence are natural, except the connecting map \(\delta\). If \(\bar{g} \in \bar{H} \cap \bar{K}\), then there exists an \(h_{\bar{g}} \in H\) such that \(p(h_{\bar{g}}) = \bar{g}\), and similarly, there exists a \(k_{\bar{g}} \in K\) such that \(p(k_{\bar{g}}) = \bar{g}\). We can then define \(\delta\) as
    \[
    \delta\colon \bar{H} \cap \bar{K} \to \dcs{H_N}{N}{K_N} \colon \bar{g} \mapsto H_N (h_{\bar{g}} k_{\bar{g}}^{-1}) K_N.
    \]
    
    This exact sequence and its connecting map are essentially ``translated`` from the language of twisted conjugacy, where they are a frequently used tool, see e.g.\@ \cite[Thm.~2.4]{dt21-a} and \cite[Prop.~2.3]{hly23-a}.
    
    From now on, we assume that \(G\) is virtually polycyclic and \(N\) is abelian. To keep the notation short and readable, we introduce some naming conventions: we define \(I \coloneq H \cap K\) and \(J \coloneq HN \cap KN\), so that \(\bar{I} = \overline{H \cap K}\) and \(\bar{J} = \bar{H} \cap \bar{K}\). Moreover, \(D \coloneq H_NK_N\) is a subgroup of \(N\) and the quotient \(M \coloneq N/D\) coincides with \(\dcs{H_N}{N}{K_N}\). In terms of this newly introduced notation, \(\delta\) becomes
    \[ \delta\colon \bar{J} \to M \colon \bar{\jmath} \mapsto h_{\bar{\jmath}} k_{\bar{\jmath}}^{-1}D. \]
    By choosing \(N\) to be abelian, we can put additional structure on both \(M\) and \(\delta\).
    
    \begin{proposition}
        The abelian group \(M\) is a \(\bar{J}\)-module via the action
        \[ \bar{\jmath} \cdot (nD) \coloneq jnj^{-1}D, \]
        with \(j \in p^{-1}(\bar{\jmath})\), and the map \(\delta\) is a group derivation with respect to this action.
    \end{proposition}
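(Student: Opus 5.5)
We verify three things in order: that the action on \(M\) is well defined, that it is an action via automorphisms, and that \(\delta\) is well defined and satisfies the derivation identity. The facts used throughout are that \(N\) is abelian and normal in \(G\), that \(H\) normalises \(H_N = H \cap N\), and that \(K\) normalises \(K_N\).

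\emph{The action.} First we show that every \(j \in J = HN \cap KN\) normalises \(D = H_N K_N\). Write \(j = hn = kn'\) with \(h \in H\), \(k \in K\) and \(n, n' \in N\). Since \(N\) is abelian, conjugating \(H_N\) by \(n\) does nothing, so \(jH_Nj^{-1} = hH_Nh^{-1} = H_N\). Likewise \(jK_Nj^{-1} = kK_Nk^{-1} = K_N\). Hence \(jDj^{-1} = D\), and conjugation by \(j\) is an automorphism of \(N\) that induces an automorphism of \(M = N/D\).

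Next, two lifts of \(\bar{\jmath}\) differ by an element of \(N\). Since \(N\) is abelian, that element acts trivially on \(N\), so the action depends only on \(\bar{\jmath}\). The lifts used are elements of \(J\), because \(p^{-1}(\bar{J}) = J\) (as \(J \supseteq N\)). Compatibility with multiplication holds because conjugation is a homomorphism \(J \to \Aut(N)\). This makes \(M\) a \(\bar{J}\)-module.

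\emph{\(\delta\) is well defined.} For \(\bar{\jmath} \in \bar{J}\) we have \(h_{\bar{\jmath}} k_{\bar{\jmath}}^{-1} \in N\), so the expression makes sense. Now take other choices \(h' = h_{\bar{\jmath}}a\) and \(k' = k_{\bar{\jmath}}b\) with \(a \in H_N\) and \(b \in K_N\); these are exactly the possible alternatives, since \(h'\) and \(h_{\bar{\jmath}}\) lie in \(H\) and have the same image under \(p\). Writing \(h = h_{\bar{\jmath}}\) and \(k = k_{\bar{\jmath}}\), we get
\[ h'k'^{-1} = h\,ab^{-1}\,k^{-1} = (h a h^{-1})\,(h k^{-1})\,(k b^{-1} k^{-1}). \]
Here \(hah^{-1} \in H_N\), \(kb^{-1}k^{-1} \in K_N\), and \(hk^{-1} \in N\). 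Since \(N\) is abelian, the product equals \(hk^{-1}\) times an element of \(D\). So the class modulo \(D\) does not depend on the choices.

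\emph{Derivation identity.} For \(\bar{\jmath}_1, \bar{\jmath}_2 \in \bar{J}\), choose \(h_1, h_2, k_1, k_2\) as above. Then \(h_1h_2\) and \(k_1k_2\) are valid lifts of \(\bar{\jmath}_1\bar{\jmath}_2\), so
\[ \delta(\bar{\jmath}_1\bar{\jmath}_2) = h_1h_2k_2^{-1}k_1^{-1}D = h_1 (h_2k_2^{-1}) h_1^{-1} \cdot h_1k_1^{-1} D. \]
Because \(h_1 \in H \cap p^{-1}(\bar{\jmath}_1) \subseteq J\), the lift \(h_1\) may be used to compute the action. The first factor is therefore \(\bar{\jmath}_1 \cdot \delta(\bar{\jmath}_2)\). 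Since \(M\) is abelian, we obtain
\[ \delta(\bar{\jmath}_1\bar{\jmath}_2) = \delta(\bar{\jmath}_1)\bigl(\bar{\jmath}_1 \cdot \delta(\bar{\jmath}_2)\bigr). \]

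The main point needing care is the first step, that elements of \(J\) normalise \(D\), which combines normality of \(N\) with its commutativity. The remaining steps are direct computations.
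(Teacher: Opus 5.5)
Your proof is correct and follows essentially the same route as the paper. Both arguments show that lifts \(j = hn = kn'\) normalise \(D = H_NK_N\) using that \(N\) is abelian and normal, check that the action is independent of the lift, and compute \(\delta(\bar{\jmath}_1\bar{\jmath}_2)\) with the product lifts \(h_1h_2\), \(k_1k_2\). The differences are cosmetic: you explicitly verify that \(\delta\) does not depend on the chosen lifts, which the paper leaves implicit, and you use \(h_1\) rather than \(k_1\) to compute the twisted term, reordering via commutativity of \(M\).
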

    \begin{proof}
        First, we verify that \(M\) is a \(\bar{J}\)-module. Let \(\bar{\jmath} \in \bar{J}\) and \(j \in p^{-1}(\bar{\jmath})\). Choose \(h\in H\) and \(k\in K\) such that \(p(h) = p(k) = \bar{\jmath}\). Then there exist \(n_H, n_K \in N\) such that \(j = hn_H = kn_K\). Since \(N\) is abelian, conjugating an element of \(N\) by \(j\) is the same as conjugating it by \(h\) or by \(k\). Moreover, since \(N\) is normal in \(G\), the subgroups \(H_N\) and \(K_N\) are normal in \(H\) and \(K\), respectively. Hence
        \[
        jH_Nj^{-1} = H_N, \qquad jK_Nj^{-1} = K_N,
        \]
        and therefore \(jDj^{-1} = D\). Thus conjugation by \(j\) induces an automorphism of \(M\). This automorphism does not depend on the choice of \(j\). Indeed, if \(j'\in p^{-1}(\bar{\jmath})\), then \(j' = jn\) for some \(n \in N\), and therefore
        \[
        j'n'(j')^{-1}=jnn'n^{-1}j^{-1}=jn'j^{-1}
        \]
        for every \(n' \in N\), where we again used that \(N\) is abelian. If \(x, y \in G\) are preimages of \(\bar{x}, \bar{y}\in\bar{J}\), then \(xy\) is a preimage of \(\bar{x}\bar{y}\), and
        \[
        \bar{x}\bar{y} \cdot (nD)
        = xyn(xy)^{-1}D
        = x(yny^{-1})x^{-1}D
        = \bar{x} \cdot (\bar{y}\cdot(nD)).
        \]
        Since the identity acts trivially, this defines an action of \(\bar{J}\) on the abelian group \(M\) via automorphisms, and the latter is indeed a \(\bar{J}\)-module.
        
        Second, we verify that \(\delta\) is a derivation. For \(\bar{x},\bar{y} \in \bar{J}\), we may choose \(h_{\bar{x}},h_{\bar{y}} \in H\) and \(k_{\bar{x}}, k_{\bar{y}} \in K\) as preimages, respectively, such that \(h_{\bar{x}}h_{\bar{y}}\) and \(k_{\bar{x}}k_{\bar{y}}\) are preimages of \(\bar{x}\bar{y}\). Then
        \begin{align*}
            \delta(\bar{x}\bar{y})
            &= h_{\bar{x}}h_{\bar{y}}k_{\bar{y}}^{-1}k_{\bar{x}}^{-1}D \\
            &=(h_{\bar{x}}k_{\bar{x}}^{-1})
            \left(k_{\bar{x}}(h_{\bar{y}}k_{\bar{y}}^{-1})k_{\bar{x}}^{-1}\right)D\\
            &= \delta(\bar{x})(\bar{x} \cdot \delta(\bar{y})),
        \end{align*}
        which concludes the proof.
    \end{proof}
    
    With this knowledge of the structures of \(M\) and \(\delta\), we can construct a new virtually polycyclic group that is easier to work with, but still encodes in its subgroups the Hirsch lengths we are interested in.
    
    \begin{proposition}
        The semi-direct product \(E \coloneq M \rtimes \bar{J}\) contains subgroups \(P\) and \(Q\) such that \(h(P) = h(Q) = h(\bar{J})\) and \(h(P \cap Q) = h(\bar{I})\).
    \end{proposition}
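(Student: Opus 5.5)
Take \(P \coloneq \set{(1,\bar{\jmath})}{\bar{\jmath} \in \bar{J}}\), the complement \(\bar{J}\) inside \(E = M \rtimes \bar{J}\), and take \(Q\) to be the graph of the derivation \(\delta\), namely \(Q \coloneq \set{(\delta(\bar{\jmath}),\bar{\jmath})}{\bar{\jmath} \in \bar{J}}\). With multiplication \((m_1,x)(m_2,y) = (m_1(x\cdot m_2),xy)\), the derivation identity \(\delta(xy)=\delta(x)(x\cdot\delta(y))\) says exactly that \(\bar{\jmath}\mapsto(\delta(\bar{\jmath}),\bar{\jmath})\) is a homomorphism. It is injective, since its composition with the projection \(E \to \bar{J}\) is the identity. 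Hence \(P\) and \(Q\) are subgroups isomorphic to \(\bar{J}\), so \(h(P)=h(Q)=h(\bar{J})\). Note that \(E\) is virtually polycyclic: \(M\) is a quotient of the abelian group \(N\) and \(\bar{J}\) is a subgroup of \(\bar{G}\).

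For the intersection, \(P\cap Q = \set{(1,\bar{\jmath})}{\delta(\bar{\jmath}) = 1}\cong \ker\delta\). So it suffices to show \(h(\ker\delta) = h(\bar{I})\). By exactness of \zcref{eqn:exactseq} at \(\bar{H}\cap\bar{K}\), the kernel of \(\delta\) (the preimage of the trivial double coset \(D\)) equals the image of \(H\cap K\to \bar{H}\cap\bar{K}\), which is \(\bar{I}\). I will verify this directly. If \(\bar{\jmath} = p(i)\) with \(i\in I\), take \(h_{\bar{\jmath}} = k_{\bar{\jmath}} = i\); this gives \(\delta(\bar{\jmath}) = D\), using that \(\delta\) is independent of the choice of lifts. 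Conversely, suppose \(h_{\bar{\jmath}}k_{\bar{\jmath}}^{-1}\in D=H_NK_N\), say \(h_{\bar{\jmath}}k_{\bar{\jmath}}^{-1} = ab\) with \(a\in H_N\) and \(b\in K_N\). Then \(a^{-1}h_{\bar{\jmath}} = b k_{\bar{\jmath}}\) lies in \(H\cap K = I\) and maps to \(\bar{\jmath}\). Thus \(\ker\delta=\bar{I}\) exactly, and \(h(P\cap Q)=h(\bar{I})\).

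The main point needing care is well-definedness of \(\delta\) as a map to \(M\), i.e.\ independence of the lifts. Changing \(h_{\bar{\jmath}}\) to \(h_{\bar{\jmath}}a\) with \(a \in H_N\), and \(k_{\bar{\jmath}}\) to \(k_{\bar{\jmath}}b\) with \(b \in K_N\), gives \(h_{\bar{\jmath}}ab^{-1}k_{\bar{\jmath}}^{-1}\). Because \(N\) is abelian and \(D\) is normalised by these elements, this differs from \(h_{\bar{\jmath}}k_{\bar{\jmath}}^{-1}\) by an element of \(D\). This was implicitly used in the preceding proposition, and I would take it as given there. Everything else is a routine check.
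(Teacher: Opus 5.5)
Your proof is correct and follows the paper's own argument: the complement \(\set{(1,\bar{\jmath})}{\bar{\jmath}\in\bar{J}}\) and the graph of the derivation \(\delta\) are both copies of \(\bar{J}\), and their intersection is isomorphic to \(\ker\delta = \bar{I}\). The only differences are that your labels for \(P\) and \(Q\) are swapped relative to the paper, which is harmless since the statement is symmetric, and that you check the well-definedness of \(\delta\) and the equality \(\ker\delta=\bar{I}\) by hand, where the paper cites exactness of the sequence.
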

    \begin{proof}
        Let \(\delta\) be as before. The exactness of \zcref[noname]{eqn:exactseq} tells us that \(\ker \delta = \bar{I}\). Define the subgroups \(P\) and \(Q\) as
        \begin{align*}
            P &\coloneq \set{(\delta(x),x)}{x \in \bar{J}},\\
            Q &\coloneq \set{(1,x)}{x \in \bar{J}}.
        \end{align*}
        Then \(P \cap Q \cong \ker \delta = \bar{I}\) and \(h(P) = h(Q) = h(\bar{J})\).
    \end{proof}
    
    Finally, in addition to the Hirsch lengths, this semi-direct product and its subgroups also retain the necessary information on the finiteness of the number of double cosets.
    
    \begin{proposition}
        \label{prop:equivalence}
        With \(E\), \(P\), \(Q\) and \(\delta\) defined as above, there is a bijection between the \((H,K)\)-double cosets of \(G\) that meet \(N\), \(\aff{\bar{J}}{\delta}{M}\), and \(\dcs{P}{E}{Q}\).
    \end{proposition}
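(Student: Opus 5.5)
I will build two explicit bijections, both passing through the orbit space \(\aff{\bar{J}}{\delta}{M}\). The first identifies double cosets meeting \(N\) with affine orbits. The second identifies affine orbits with \(\dcs{P}{E}{Q}\).

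\emph{Double cosets meeting \(N\).} A double coset \(HgK\) meets \(N\) exactly when \(\bar{g} \in \bar{H}\bar{K}\). Every such coset can be written as \(HnK\) with \(n \in N\). So I consider the surjection
\[
\Phi \colon M = N/D \to \set{HgK}{HgK \cap N \neq \emptyset}, \qquad nD \mapsto HnK.
\]
It is well defined because \(H_N n K_N = nH_NK_N = nD\), using that \(N\) is abelian.

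Next I determine when \(HnK = Hn'K\) for \(n,n' \in N\). This happens iff \(n' = h n k\) for some \(h\in H\), \(k \in K\). Projecting to \(\bar{G}\) gives \(\bar{h}\bar{k} = 1\). Hence \(\bar{\jmath} \coloneq \bar{h} = \bar{k}^{-1}\) lies in \(\bar{J}\), and we may take \(h = h_{\bar{\jmath}}\) and \(k^{-1}\) as a lift \(k_{\bar{\jmath}}\) (after absorbing elements of \(H_N, K_N\), which are harmless modulo \(D\)). Then
\[
n' = h_{\bar{\jmath}}\, n\, k_{\bar{\jmath}}^{-1} = (h_{\bar{\jmath}}k_{\bar{\jmath}}^{-1})\,(k_{\bar{\jmath}} n k_{\bar{\jmath}}^{-1}),
\]
and modulo \(D\) this equals \(\delta(\bar{\jmath})(\bar{\jmath}\cdot nD) = \bar{\jmath} \ast_\delta nD\). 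Here I use that conjugation by \(k_{\bar{\jmath}}\) realises the module action, since any lift of \(\bar{\jmath}\) works. The argument reverses: from \(\bar{\jmath}\) with \(n'D = \bar{\jmath}\ast_\delta nD\) I recover \(h,k\) with \(n' \in hnkD\) and absorb \(D\) into \(H\) and \(K\). So \(\Phi\) induces a bijection from \(\aff{\bar{J}}{\delta}{M}\) onto the double cosets meeting \(N\).

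The main obstacle is the bookkeeping of lifts and \(D\)-cosets in this step. In particular, I must check that different choices of \(h_{\bar{\jmath}}, k_{\bar{\jmath}}\) change \(\delta(\bar{\jmath})\) only by elements of \(D\). This holds because lifts differ by elements of \(H_N\) and \(K_N\), which normalise \(D\).

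\emph{Affine orbits and \(\dcs{P}{E}{Q}\).} Each double coset \(P(m,x)Q\) contains \((m,x)(1,x^{-1}) = (m,1)\), so the map \(\Psi \colon M \to \dcs{P}{E}{Q}\), \(m \mapsto P(m,1)Q\), is surjective. For \(y \in \bar{J}\) I compute
\[
(\delta(y),y)(m,1)(1,y^{-1}) = (\delta(y)(y\cdot m),1) = (y\ast_\delta m, 1).
\]
Thus \(P(m,1)Q = P(m',1)Q\) iff \(m' = y\ast_\delta m\) for some \(y\), where comparing second coordinates forces the \(Q\)-element to be \((1,y^{-1})\). Hence \(\Psi\) descends to a bijection \(\aff{\bar{J}}{\delta}{M} \to \dcs{P}{E}{Q}\). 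Composing the two bijections gives the statement.
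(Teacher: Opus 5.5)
Your proposal is correct and follows essentially the same route as the paper: both show that \(HnK = Hn'K\) if and only if \(nD\) and \(n'D\) lie in the same affine \(\bar{J}\)-orbit, and then identify affine orbits with \(\dcs{P}{E}{Q}\) via the representatives \((m,\bar{1})\) and the computation \((\delta(y),y)(m,\bar{1})(1,y^{-1}) = (y \ast_\delta m,\bar{1})\). One small point concerns your aside that \(\delta(\bar{\jmath})\) does not depend on the chosen lifts, which is really part of the setup: it holds because \(H\) normalises \(H_N\) and \(K\) normalises \(K_N\) (e.g.\ for \(a \in H_N\) and \(b \in K_N\) one has \(hab^{-1}k^{-1} = (hah^{-1})(hk^{-1})(kb^{-1}k^{-1})\) with all three factors in the abelian group \(N\)), not merely because \(H_N\) and \(K_N\) normalise \(D\).
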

    \begin{proof}
        Let \(n_1, n_2 \in N\) and set \(m_i \coloneq n_iD\). The group \(\bar{J}\) acts affinely on \(M\) by
        \[ \bar{\jmath} \ast m \coloneq \delta(\bar{\jmath})(\bar{\jmath} \cdot m). \]
        First, we show that
        \[ Hn_1K = Hn_2K \iff \bar{J} \ast m_1 = \bar{J} \ast m_2.\]
        Suppose that \(n_1 = hn_2k\) for \(h \in H\) and \(k \in K\). Setting
        \[ \bar{\jmath} \coloneq p(h) = p(k^{-1}) \in \bar{H} \cap \bar{K} = \bar{J}\]
        we find that \(\delta(\bar{\jmath}) = hkD\). Thus,
        \[
        \bar{\jmath} \ast m_2
        = (\bar{\jmath} \cdot n_2D)\delta(\bar{\jmath})
        = (hn_2h^{-1}D)(hkD)
        = hn_2kD
        = m_1.
        \]
        Conversely, suppose that \(m_1= \bar{\jmath} \ast m_2\) for some \(\bar{\jmath} \in \bar{J}\). Then
        \(\bar{\jmath} \ast (n_2D) = h_{\bar{\jmath}} n_2 k_{\bar{\jmath}}^{-1}D\),
        hence \(n_1 = h_{\bar{\jmath}} n_2 k_{\bar{\jmath}}^{-1}d\) for some \(d \in D\). Let \(d = h_dk_d\) with \(h_d \in H_N\), \(k_d \in K_N\). Using that \(N\) is abelian we find
        \[n_1 = h_{\bar{\jmath}} n_2 k_{\bar{\jmath}}^{-1}h_dk_d = (h_dh_{\bar{\jmath}}) n_2 (k_{\bar{\jmath}}^{-1}k_d),\]
        so indeed \(Hn_1K = Hn_2K\).
        
        Second, identifying \(M\) with its natural inclusion in \(E\), we show that
        \[ Pm_1Q = Pm_2Q \iff \bar{J} \ast m_1 = \bar{J} \ast m_2.\]
        Suppose that \(m_1 = pm_2q\) with \(p \in P, q \in Q\), i.e.
        \begin{align*}
            (m_1,\bar{1})
            = (\delta(\bar{x}),\bar{x})(m_2,\bar{1})(1,\bar{y})
            = (\delta(\bar{x})(\bar{x} \cdot m_2),\bar{x}\bar{y})
            = (\bar{x} \ast m_2,\bar{x}\bar{y})
        \end{align*}
        for certain \(\bar{x}, \bar{y} \in \bar{J}\). Then clearly we must have \(\bar{x} = \bar{y}^{-1}\) and hence \(m_1 = \bar{x} \ast m_2\). Conversely, if \(m_1 = \bar{\jmath} \ast m_2\), then \(m_1 = pm_2q\) with \(p = (\delta(\bar{\jmath}), \bar{\jmath})\) and \(q = (1,{\bar{\jmath}}^{\,-1})\). The result now follows by remarking that every \((P,Q)\)-double coset has a representative of the form \((m,\bar{1})\). Indeed, if \((m,\bar{\jmath})\) is an element of a particular \((P,Q)\)-double coset, then so is \((m,\bar{1}) = (m,\bar{\jmath})(1,\bar{\jmath}^{-1})\) since \((1,\bar{\jmath}^{-1}) \in Q\).
    \end{proof}
    
    \section{Metabelian groups}
    \label{sec:basecase}
    
    Proving \zcref{thm:mainresultA} for finitely generated abelian groups is nearly trivial, since the result then follows easily from the second isomorphism theorem. As an intermediate step towards proving it in full generality, we look at polycyclic metabelian groups.
    
    \begin{lemma}
        \label{lem:metabeliancase}
        Let \(H, K\) be subgroups of a polycyclic metabelian group \(G\). Then
        \[ h(H) + h(K) \leq h(H \cap K) + h(G).\]
        Moreover, we have equality when the number of \((H,K)\)-double cosets is finite.
    \end{lemma}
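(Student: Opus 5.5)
We apply the default setup of \zcref{sec:setup} with \(N \coloneq [G,G]\). This \(N\) is abelian because \(G\) is metabelian, and the quotient \(\bar G = G/N\) is also abelian. Then \(\bar J = \bar H\cap\bar K\) is a finitely generated abelian group, and \(M = N/D\), with \(D = H_NK_N\), is a finitely generated abelian \(\bar J\)-module. The map \(\delta\colon \bar J\to M\) is a derivation with \(\ker\delta = \bar I\), where \(\bar I = \overline{H\cap K}\). Applying \zcref{prop:abelianderv} to \(\delta\) gives
\[ h(\bar J) \leq h(\bar I) + h(M) = h(\bar I) + h(N) - h(D). \]
Since \(N\) is abelian, \(D = H_NK_N\) is a subgroup, and the second isomorphism theorem gives \(h(D) = h(H_N)+h(K_N)-h(H_N\cap K_N)\). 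Note also that \(H_N\cap K_N = I_N\), where \(I = H\cap K\).

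Next we compute \(h(\bar J)\). Since \(\bar G\) is abelian, \(\bar H\bar K\) is a subgroup and \(h(\bar J) = h(\bar H)+h(\bar K)-h(\bar H\bar K) \geq h(\bar H)+h(\bar K)-h(\bar G)\). Substituting this into the first inequality and using \(h(H)=h(H_N)+h(\bar H)\), \(h(K) = h(K_N)+h(\bar K)\), \(h(I)=h(I_N)+h(\bar I)\) and \(h(G)=h(N)+h(\bar G)\), we get
\[ h(\bar H)+h(\bar K)-h(\bar G) \leq h(\bar I)+h(N)-h(H_N)-h(K_N)+h(I_N). \]
Rearranging gives exactly \(h(H)+h(K)\leq h(I)+h(G)\).

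For the equality case, suppose \(\card{\dcs{H}{G}{K}}<\infty\). Then only finitely many double cosets meet \(N\). By \zcref{prop:equivalence} the affine action of \(\bar J\) on \(M\) via \(\delta\) has finitely many orbits, so \zcref{prop:abelianderv} gives equality in the first inequality. By exactness of \zcref[noname]{eqn:exactseq}, \(\dcs{\bar H}{\bar G}{\bar K}\) is also finite. Since \(\bar G\) is abelian, this set is \(\bar G/\bar H\bar K\), so \(\bar H\bar K\) has finite index in \(\bar G\) and \(h(\bar H\bar K)=h(\bar G)\). Hence both inequalities above are equalities, and so is the final one.

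The main subtlety is the bookkeeping: we must check that \(D\) is a genuine subgroup so that its Hirsch length is given by the abelian second isomorphism theorem, and that \(h(M)=h(N)-h(D)\). The substantive difficulty is handled by \zcref{prop:abelianderv}, which needs \(\bar J\) abelian. This is exactly why we choose \(N=[G,G]\): it makes both \(N\) and \(\bar G\) abelian. No torsion issues arise, since Hirsch length ignores finite pieces.
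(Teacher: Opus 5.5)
Your proposal is correct and is essentially the paper's own proof. You take \(N = G'\), apply \zcref{prop:abelianderv} to \(\delta\colon \bar J \to M\), and compute \(h(M)\) and \(h(\bar J)\) via the second isomorphism theorem in the abelian groups \(N\) and \(\bar G\). You then get the equality case from \zcref{prop:equivalence} together with the surjection \(\dcs{H}{G}{K} \to \dcs{\bar H}{\bar G}{\bar K}\), exactly as the paper does.
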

    \begin{proof}
        Set \(N \coloneq G'\) and consider the usual setup from \zcref{sec:setup}. By \zcref{prop:abelianderv}, we know that
        \begin{equation}
            \label{eqn:metabelian1}
            h(\bar{J}) \leq h(\ker \delta) + h(M) = h(\bar{I}) + h(M),
        \end{equation}
        and since both \(N\) and \(G/N\) are abelian we also have
        \begin{equation}
            \label{eqn:metabelian2}
            h(M) = h(N) - h(D) = h(N) - h(H_N) - h(K_N) + h(I_N)
        \end{equation}
        and
        \begin{equation}
            \label{eqn:metabelian3}
            h(\bar{H}) + h(\bar{K}) = h(\bar{J}) + h(\bar{H}\bar{K}) \leq h(\bar{J}) + h(\bar{G}).
        \end{equation}
        Combining \zcref[nocap]{eqn:metabelian1,eqn:metabelian2,eqn:metabelian3} gives:
        \begin{align}
            \label{eqn:metabelian4}
            \begin{split}
                h(H) + h(K) &= h(H_N) + h(K_N) + h(\bar{H}) + h(\bar{K})\\
                &\leq h(N) + h(I_N) - h(M) + h(\bar{J}) + h(\bar{G})\\
                &\leq h(I_N) + h(\bar{I}) + h(N) + h(\bar{G})\\
                &= h(I) + h(G),
            \end{split}
        \end{align}
        which proves the inequality.
        
        Now suppose that \(G\) is the union of finitely many \((H,K)\)-double cosets. Projecting to \(\bar{G}\), we have that \(\bar{H}\bar{K}\) has finite index in \(\bar{G}\) and hence inequality \zcref[noname]{eqn:metabelian3} is now an equality. By \zcref{prop:equivalence}, the finiteness of \(\card{\dcs{H}{G}{K}}\) implies that \(\aff{\bar{J}}{\delta}{M}\) is also finite, and hence inequality \zcref{eqn:metabelian1} becomes an equality as well.
        Therefore, all inequalities in \zcref[noname]{eqn:metabelian4} are now equalities.
    \end{proof}

    \section{General case}
    \label{sec:maincase}
    
    With the results from the previous two sections, we now have all the necessary tools at our disposal to prove the main result of this paper.
    
    \getkeytheorem{mainA}
    \begin{proof}
        We start by proving the inequality. By passing to a finite-index subgroup, we may assume that \(G\) is nilpotent-by-abelian \cite[Thm.~4]{malc51-b}. We now proceed by (strong) induction on the nilpotency class of \(G'\). If \(G'\) is trivial then \(G\) is abelian, hence all subgroups are normal and everything easily follows from the second isomorphism theorem. If \(G'\) is abelian, then \(G\) is metabelian, and this was proved in \zcref{lem:metabeliancase}.
        So assume the nilpotency class \(c\) of \(G'\) is greater than \(1\) and set \(N \coloneq Z(G')\). Consider the usual setup from \zcref{sec:setup}.
        
        Set \(L \coloneq J \cap G'\), then \(\bar{L} \leq \bar{J}\) is normal in \(E\) and commutes with \(M\). So the product \(M\bar{L}\) is normal in \(E\). But
        \[ \frac{E}{M\bar{L}} \cong \frac{\bar{J}}{\bar{L}} \cong \frac{J}{L} \cong \frac{JG'}{G'} \leq \frac{G}{G'},\]
        so the quotient \(E/M\bar{L}\) is abelian. Moreover, \(\bar{L} \leq \bar{G}'\) has nilpotency class at most \(c-1\), hence so does \(M\bar{L} \cong M \times \bar{L}\), and finally so does \(E' \leq M\bar{L}\). By the induction hypothesis,
        \begin{align}
            \label{eqn:JJIMJ}
            \begin{split}
                h(\bar{J}) + h(\bar{J})
                &= h(P) + h(Q) \\
                &\leq h(P \cap Q) + h(E)\\
                &= h(\bar{I}) + h(M) + h(\bar{J}),
            \end{split}
        \end{align}
        so \(h(\bar{J}) \leq h(\bar{I}) + h(M)\). The commutator subgroup \(\bar{G}'\) of the quotient \(\bar{G}\) has nilpotency class \(c-1\), so again the induction hypothesis implies that
        \[
        h(\bar{H}) + h(\bar{K}) \leq h(\bar{J}) + h(\bar{G}).
        \]
        \zcref[cap]{eqn:metabelian2} still holds, so we can just repeat the steps in \zcref{eqn:metabelian4} to obtain the desired inequality.
        
        Now suppose the number of \((H,K)\)-double cosets is finite. We can again assume \(G\) is nilpotent-by-abelian and then proceed by (strong) induction on the nilpotency class \(c\) of \(G'\). The cases \(c=0\) and \(c=1\) have already been taken care of, so assume that \(c>1\) and set \(N=Z(G')\). As always, consider the usual setup. The projection \(p\colon G \to \bar{G}\) induces a surjection
        \[
        \dcs{H}{G}{K} \to \dcs{\bar{H}}{\bar{G}}{\bar{K}} \colon HgK \mapsto \bar{H}\bar{g}\bar{K},
        \]
        hence \(\card{\dcs{\bar{H}}{\bar{G}}{\bar{K}}}\) is finite. Since \(\bar{G}'\) has nilpotency class \(c-1\), the induction hypothesis gives
        \[
        h(\bar{H}) + h(\bar{K}) = h(\bar{J}) + h(\bar{G}).
        \]
        By \zcref{prop:equivalence}, there is a bijection between \(\dcs{P}{E}{Q}\) and
        \[
        \set{HgK \in \dcs{H}{G}{K}}{HgK \cap N \neq \varnothing},
        \]
        so certainly \(\card{\dcs{P}{E}{Q}}\) is finite. Moreover, the commutator subgroup \(E'\) has nilpotency class at most \(c-1\), so the induction hypothesis now provides equality in \zcref{eqn:JJIMJ}. Therefore,
        \[
        h(\bar{J}) = h(\bar{I}) + h(M).
        \]
        We can now repeat the steps from \zcref{eqn:metabelian4}, but all inequalities have become equalities.
        
        Finally, suppose that \(G\) is nilpotent and that equality holds. The set of torsion elements \(T\) then forms a finite normal subgroup of \(G\), so we can consider the projection \(q\colon G \to G/T\) and set
        \(\tilde{G} \coloneq q(G)\), \(\tilde{H} \coloneq q(H)\), \(\tilde{K} \coloneq q(K)\), \(\tilde{I} \coloneq q(I)\) and
        \(\tilde{J} \coloneq \tilde{H} \cap \tilde{K}\).
        It is immediate that
        \[
        h(\tilde{G})=h(G),\qquad h(\tilde{H})=h(H),\qquad h(\tilde{K})=h(K),
        \]
        while showing that \(h(\tilde{J})=h(H\cap K)\) requires a bit of effort. For \(\tilde{\jmath} \in \tilde{J}\),
        choose \(h_{\tilde{\jmath}}\in H\), \(k_{\tilde{\jmath}}\in K\) such that
        \(q(h_{\tilde{\jmath}})=q(k_{\tilde{\jmath}})=\tilde{\jmath}\), and put \(t_{\tilde{\jmath}} \coloneq h_{\tilde{\jmath}}^{-1}k_{\tilde{\jmath}} \in T\). Now, for each \(t \in T \cap HK\), pick \(h_t \in H\) and \(k_t \in K\) such that \(t = h_t^{-1}k_t\). Thus, if \(t_{\tilde{\jmath}} = t\), then
        \[ h_{\tilde{\jmath}} h_t^{-1} = k_{\tilde{\jmath}} k_t^{-1} \in H \cap K\]
        and hence
        \[ \tilde{\jmath} = q(h_{\tilde{\jmath}}) = q(h_{\tilde{\jmath}} h_t^{-1} h_t) = q(h_{\tilde{\jmath}} h_t^{-1})q( h_t) \in \tilde{I} q(h_t). \]
        Thus, \(\ind{\tilde{J}}{\tilde{I}} \leq \card{T}\), so \(h(\tilde{I}) = h(\tilde{J})\). Then from the original equality it follows that
        \[
        h(\tilde{H}) + h(\tilde{K}) = h(\tilde{H} \cap \tilde{K}) + h(\tilde{G}).
        \]
        From \cite[Thm.~4.11]{hly23-a} it follows that \(\card{\dcs{\tilde{H}}{\tilde{G}}{\tilde{K}}} < \infty\). Let \(\tilde{g}_1, \ldots, \tilde{g}_r\) be representatives of the \((\tilde{H},\tilde{K})\)-double cosets in \(\tilde{G}\), and choose preimages \(g_i \in q^{-1}(\tilde{g}_i)\). Since \(T\) is normal in \(G\), it follows easily that
        \[ G = \bigcup_{i=1}^{r} \bigcup_{t \in T} Hg_i t K,\]
        and hence by the finiteness of \(T\) we conclude that \(\card{\dcs{H}{G}{K}} < \infty\).
    \end{proof}

    It may be tempting to assume that we get equality in \zcref{thm:mainresultA} when \(G = \langle H, K \rangle\). We illustrate below that this need not be the case.
    \begin{example}
        Let \(G\) be the integral Heisenberg group, given by the presentation
        \[ \langle a,b,c \mid [a,b] = c, [a,c] = [b,c] = 1\rangle.\]
        Set \(H \coloneq \langle a\rangle\) and \(K \coloneq \langle b \rangle\). Then \(G = \langle H,K\rangle\), but \[ h(H) + h(K) = 1+1 < 0 + 3 = h(H \cap K) + h(G).\]
    \end{example}
    
    We also illustrate that in general, for a non-nilpotent group \(G\) equality does not suffice to guarantee a finite number of double cosets.
    \begin{example}
        \label{exm:kleinbottle}
        Let \(G\) be the fundamental group of the Klein bottle, given by the presentation
        \[ \langle a,b \mid bab^{-1} = a^{-1}\rangle. \]
        Set \(H \coloneq \langle ab^2\rangle\) and \(K \coloneq \langle a^{-1}b^2 \rangle\). Then \(h(H) + h(K) = h(H \cap K) + h(G)\), but the double cosets \(\set{Ha^nbK}{ n \in \ZZ}\) are pairwise distinct, so \(\card{\dcs{H}{G}{K}} = \infty\).
    \end{example}
    
    \begin{corollary}
        \label{cor:derivationmainresult}
        Let \(G, H\) be virtually polycyclic groups and let \(\delta\colon G \to H\) be a group derivation. Then
        \[ h(G) \leq h(\ker \delta) + h(H),\]
        and moreover:
        \begin{itemize}
            \item if \(\aff{G}{\delta}{H}\) is finite, then the above is an equality;
            \item if \(H \rtimes G\) is nilpotent and the above is an equality, then \(\aff{G}{\delta}{H}\) is finite.
        \end{itemize}
    \end{corollary}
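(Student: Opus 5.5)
We reduce to \zcref{thm:mainresultA} by encoding \(\delta\) as a pair of subgroups, just as in \zcref{sec:setup}. Form the semidirect product \(E \coloneq H \rtimes G\), with multiplication \((h_1,g_1)(h_2,g_2) = (h_1(g_1\cdot h_2), g_1g_2)\). It is virtually polycyclic, since \(H\) is a normal virtually polycyclic subgroup with virtually polycyclic quotient \(G\), and \(h(E) = h(H)+h(G)\). Inside \(E\) we take
\[
P \coloneq \set{(\delta(g),g)}{g \in G}, \qquad Q \coloneq \set{(1,g)}{g \in G}.
\]
The derivation identity \(\delta(g_1g_2) = \delta(g_1)(g_1\cdot\delta(g_2))\) says precisely that \(P\) is a subgroup and that \(g \mapsto (\delta(g),g)\) is an isomorphism \(G \to P\). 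Clearly \(Q \cong G\), and \(P \cap Q = \set{(1,g)}{\delta(g)=1} \cong \ker\delta\), which is a subgroup by \zcref{prop:derivprops}(4). Hence \(h(P)=h(Q)=h(G)\) and \(h(P\cap Q) = h(\ker\delta)\).

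Applying the inequality of \zcref{thm:mainresultA} to \(P,Q \leq E\) gives
\[
2h(G) \leq h(\ker\delta) + h(H) + h(G),
\]
which is the desired inequality after cancelling one copy of \(h(G)\).

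For the two equality statements we need a bijection \(\dcs{P}{E}{Q} \leftrightarrow \aff{G}{\delta}{H}\), obtained exactly as in the second half of the proof of \zcref{prop:equivalence}.
\begin{itemize}
\item Every \((P,Q)\)-double coset contains an element \((h,1)\), since \((h,g)(1,g^{-1}) = (h,1)\) and \((1,g^{-1}) \in Q\).
\item We have \((\delta(x),x)(h,1)(1,y) = (x \ast_\delta h, xy)\). So \((h_1,1) \in P(h_2,1)Q\) forces \(y = x^{-1}\) and \(h_1 = x\ast_\delta h_2\). Conversely, \(h_1 = x \ast_\delta h_2\) gives \((h_1,1) = (\delta(x),x)(h_2,1)(1,x^{-1})\).
\end{itemize}
Thus \((h,1) \mapsto G\ast_\delta h\) induces a well-defined bijection. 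This holds for non-abelian \(H\) too, because the computation only uses the semidirect product law and the derivation identity.

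The bullets then follow directly.
\begin{itemize}
\item If \(\aff{G}{\delta}{H}\) is finite, then \(\card{\dcs{P}{E}{Q}}\) is finite. The first bullet of \zcref{thm:mainresultA} gives equality for \(P,Q\), and this translates to \(h(G) = h(\ker\delta)+h(H)\).
\item If \(E = H\rtimes G\) is nilpotent and equality holds, the same arithmetic shows equality for \(P,Q \leq E\). The second bullet of \zcref{thm:mainresultA} yields \(\card{\dcs{P}{E}{Q}}<\infty\), hence \(\aff{G}{\delta}{H}\) is finite.
\end{itemize}

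The only point requiring care is checking that the bijection argument of \zcref{prop:equivalence}, written there for an abelian module \(M\), really goes through for a general semidirect product with non-abelian \(H\). In particular the order of factors in \(x\ast_\delta h = \delta(x)(x\cdot h)\) must match the multiplication in \(E\). With the convention above it does, so this step should pose no real obstacle.
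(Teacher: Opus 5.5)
Your proposal is correct and follows the paper's own proof. You use the same semidirect product \(H \rtimes G\) with the graph subgroup \(P\) and the complement \(Q\), apply \zcref{thm:mainresultA} in the same way, and use the same affine-orbit/double-coset correspondence adapted from the proof of \zcref{prop:equivalence}. You also write out the checks the paper leaves implicit: that \(P\) is a subgroup, that \(H\rtimes G\) is virtually polycyclic, and that the correspondence still holds for non-abelian \(H\). Those checks are all sound.
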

    \begin{proof}
        Construct the semi-direct product \(H \rtimes G\) and its subgroups
        \begin{align*}
            P &\coloneq \set{(\delta(g),g)}{g \in G},\\
            Q &\coloneq \set{(1_H,g)}{g \in G}.
        \end{align*}
        Then \(\ker \delta \cong P \cap Q\) and from \zcref{thm:mainresultA} we get
        \begin{align*}
            h(G) + h(G)
            &= h(P) + h(Q)\\
            &\leq h(P \cap Q) + h(H \rtimes G)\\
            &= h(\ker \delta) + h(H) + h(G).
        \end{align*}
        Subtracting \(h(G)\) from both sides gives the desired inequality.
        
        The statements regarding finiteness of \(\aff{G}{\delta}{H}\) follow easily from the fact that the affine orbits via \(\delta\) correspond exactly to the \((P,Q)\)-double cosets. This correspondence can be shown in exactly the same way as in the proof of \zcref{prop:equivalence}.
    \end{proof}
    
    \section{Application to twisted conjugacy}
    \label{sec:twicon}
    
    Twisted conjugation is the group action of \(G\) on \(H\) given by 
    \[G \times H \to H \colon (g,h) \mapsto \varphi(g) h \psi(g)^{-1},\]
    where \(\varphi,\psi \colon G \to H\) are group homomorphisms. The orbits are called the \((\varphi,\psi)\)-twisted conjugacy classes, and the stabiliser of \(1_H\) under this action is exactly the coincidence group (or equaliser) \(\Coin(\varphi,\psi)\). The set of all orbits is denoted by \(\R[\varphi,\psi]\), and its cardinality \(R(\varphi,\psi)\) is called the Reidemeister number of the pair \((\varphi,\psi)\).
    
    As mentioned in the introduction, the inequality from \zcref{thm:mainresultA} was stated in \cite{wong00-a} as Proposition 3. In the given proof, it is claimed that if \(1 = G_0 \normalsub G_1 \normalsub \cdots \normalsub G_k = G\) is a cyclic subnormal series of a polycyclic group \(G\), and \(H\) and \(K\) are subgroups of \(G\), then \((H \cap K \cap G_i)/(H \cap K \cap G_{i-1})\) is infinite if both \((H \cap G_i) / (H \cap G_{i-1})\) and \((K \cap G_i) / (K \cap G_{i-1})\) are. We illustrate below that this need not be the case.
    
    \begin{example}
        Let \(G \coloneq \langle a,b \mid [a,b] = 1\rangle \cong \ZZ^2\) with subgroups \(H \coloneq \langle a \rangle \cong \ZZ\) and \(K \coloneq \langle ab \rangle \cong \ZZ\). As a cyclic subnormal series, take \(G_1 \coloneq \langle b \rangle\) and \(G_2 \coloneq G\). Then
        \begin{align*}
            \frac{H \cap G_2}{H \cap G_1} \cong H \cong \ZZ, \quad \frac{K \cap G_2}{K \cap G_1} \cong K \cong \ZZ, \quad\frac{H \cap K \cap G_2}{H \cap K \cap G_1} \cong 1.
        \end{align*}
    \end{example}
    
    This proposition served as a tool to prove that, given two homomorphisms \(\varphi,\psi\) between two virtually polycyclic groups \(G\) and \(H\), there is a connection between the Hirsch lengths of \(H\) and \(G\), the coincidence group \(\Coin(\varphi,\psi)\), and the Reidemeister number \(R(\varphi,\psi)\).
    
    Unfortunately, the proof of \cite[Thm.~2]{wong00-a} relies on the aforementioned proof, and not just on the (correct) result itself. Moreover, the proof contains another mistake that is analogous to assuming the image of a derivation is always a subgroup of its codomain (see \zcref{exm:derivimage} for a counterexample). This mistake was reported in \cite{hly23-a} and is discussed in detail there.
    
    Using \zcref{thm:mainresultA}, we prove the \zcref[noref,nocap]{thm:mainresultB} below, which combines and generalises \cite[Lem.~2 and Thm.~2]{wong00-a} from torsion-free polycyclic groups to (not necessarily torsion-free) virtually polycyclic groups. For finitely generated torsion-free nilpotent groups, this result was also proved in \cite[Thm.~2.5]{gonc98-b} and \cite[Cor.~4.12]{hly23-a}.
    
    \begin{theorem}[manual-num=B]
        \label{thm:mainresultB}
        Let \(G, H\) be virtually polycyclic groups and \(\varphi,\psi \in \Hom(G,H)\). Then
        \[ h(\Coin(\varphi,\psi)) \geq h(G) - h(H),\]
        and moreover:
        \begin{itemize}
            \item if \(R(\varphi,\psi)\) is finite, then the above is an equality;
            \item if \(H\) is nilpotent and the above is an equality, then \(R(\varphi,\psi)\) is finite.
        \end{itemize}
    \end{theorem}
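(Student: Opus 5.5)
The plan is to translate twisted conjugacy into a double coset problem in \(G \times H\) (or \(H \times H\)) and then apply \zcref{thm:mainresultA}. Concretely, in the product \(\Gamma \coloneq H \times H\), which is virtually polycyclic with \(h(\Gamma) = 2h(H)\), I will consider the diagonal \(\Delta \coloneq \set{(x,x)}{x \in H}\) and the image \(\Phi \coloneq \set{(\varphi(g),\psi(g))}{g \in G}\) of the homomorphism \((\varphi,\psi)\colon G \to H \times H\). Then \(\Phi \cap \Delta\) is the image of \(\Coin(\varphi,\psi)\), and the kernel \(\ker\varphi \cap \ker\psi\) of \((\varphi,\psi)\) lies inside \(\Coin(\varphi,\psi)\). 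Hence
\[ h(\Phi) = h(G) - h(\ker\varphi\cap\ker\psi), \qquad h(\Phi\cap\Delta) = h(\Coin(\varphi,\psi)) - h(\ker\varphi\cap\ker\psi). \]

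Applying \zcref{thm:mainresultA} to \(\Phi,\Delta \leq \Gamma\) gives
\[ h(\Phi) + h(H) \leq h(\Phi\cap\Delta) + 2h(H). \]
Substituting the two identities above and cancelling \(h(\ker\varphi\cap\ker\psi)\) yields \(h(G) \leq h(\Coin(\varphi,\psi)) + h(H)\), which is the inequality.

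Next I will show that \((\varphi,\psi)\)-twisted conjugacy classes correspond bijectively to \((\Phi,\Delta)\)-double cosets. Each double coset \(\Phi(a,b)\Delta\) contains an element \((ab^{-1},1)\), so the map \(\R[\varphi,\psi] \to \dcs{\Phi}{\Gamma}{\Delta}\) sending \([x] \mapsto \Phi(x,1)\Delta\) is surjective. For injectivity, suppose \((\varphi(g)x,\psi(g)) = (y z, z)\). Then \(z = \psi(g)\), so \(y = \varphi(g)x\psi(g)^{-1}\). Consequently \(R(\varphi,\psi) = \card{\dcs{\Phi}{\Gamma}{\Delta}}\).

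With this correspondence, the first bullet of \zcref{thm:mainresultA} gives equality when \(R(\varphi,\psi) < \infty\). If \(H\) is nilpotent, then so is \(\Gamma = H\times H\), and the second bullet gives finiteness of \(R(\varphi,\psi)\) from equality, because the equality in \zcref{thm:mainresultB} is equivalent to equality for \(\Phi,\Delta\) by the same cancellation.

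The main point needing care is the Hirsch length bookkeeping through the kernel. Specifically, \(\Phi\cap\Delta\) is exactly \((\varphi,\psi)(\Coin(\varphi,\psi))\), since \((\varphi(g),\psi(g))\in\Delta\) if and only if \(g\in\Coin(\varphi,\psi)\). Also, the kernel is contained in \(\Coin(\varphi,\psi)\), so that \(h\) is additive there. Both facts are routine. Beyond them, everything follows directly from \zcref{thm:mainresultA}, and no separate nilpotency argument is needed.
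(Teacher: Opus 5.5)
Your proposal is correct and follows essentially the same route as the paper's proof. Both use the same pair of subgroups of \(H \times H\) (the diagonal and the image of \((\varphi,\psi)\)), do the same Hirsch length bookkeeping through \(\ker\varphi \cap \ker\psi\), and identify double cosets with \((\varphi,\psi)\)-twisted conjugacy classes. The only difference is that you put the image on the left and the diagonal on the right, which is harmless since Theorem~A is symmetric in its two subgroups.
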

    \begin{proof}
        We follow the approach in \cite[Sec.~2]{hly23-a}. We refer to that paper for the details and limit ourselves to an outline of the proof below. Define
        \begin{align*}
            K &\coloneq \set{ (h,h) }{ h \in H },\\
            L &\coloneq \set{(\varphi(g),\psi(g))}{g \in G},\\
            N &\coloneq \ker \varphi \cap \ker \psi,
        \end{align*}
        and note that \(H \cong K\), \(G/N \cong L\) and \(\Coin(\varphi,\psi)/N \cong K \cap L\).
        Thus, applying \zcref{thm:mainresultA}, we get
        \[ h(K) + h(L) \leq h(K \cap L) + h(H \times H).\]
        Using the preceding isomorphisms, this gives that
        \[ h(H) + h(G) - h(N) \leq h(\Coin(\varphi,\psi)) - h(N) + 2h(H),\]
        and subtracting \(2h(H) - h(N)\) from both sides results in the desired inequality.
        
        Moreover, the map
        \[ \dcs{K}{(H \times H)}{L} \to \R[\varphi,\psi] \colon K(h_1,h_2)L \mapsto [h_1^{-1} h_2]_{\varphi,\psi}\]
        is a bijection, so the \((K,L)\)-double cosets correspond to the \((\varphi,\psi)\)-twisted conjugacy classes. The two statements about the finiteness of \(R(\varphi,\psi)\) then follow readily.
    \end{proof}
    
    \begin{remark}
        Above, we proved \zcref{thm:mainresultB} using \zcref{thm:mainresultA}. Conversely, if we take \zcref{thm:mainresultB} as a black-box theorem, then \zcref{thm:mainresultA} follows readily. Given subgroups \(H, K\leq G\), define the homomorphisms
        \[
        \varphi \colon H \times K \to G \colon (h,k) \mapsto h,\quad \psi \colon H \times K \to G \colon (h,k) \mapsto k.
        \]
        The coincidence group \(\Coin(\varphi,\psi)\) is isomorphic to \(H \cap K\), hence using the inequality from \zcref{thm:mainresultB} we get
        \[
        h(H\cap K) = h(\Coin(\varphi,\psi)) \geq h(H \times K) - h(G) = h(H) + h(K) - h(G).
        \]
        Moreover, the \((\varphi,\psi)\)-twisted conjugacy classes are exactly the \((H,K)\)-double cosets, so the two finiteness statements from \zcref{thm:mainresultB} directly imply those from \zcref{thm:mainresultA}.
    \end{remark}
    
    We end this article by illustrating that for non-nilpotent groups, equality does not imply finite Reidemeister number.
    
    \begin{example}
        Let \(G\) be the group in \zcref{exm:kleinbottle} and let \(\varphi \colon G \to G\) be the automorphism defined by
        \[ \varphi(a) = a^{-1},\qquad \varphi(b) = b^{-1}.\]
        Then \(\Coin(\varphi,\id_G)\) is trivial, but \(R(\varphi,\id_G) = \infty\) (see \cite[Thm.~2.2]{gw09-a}).
    \end{example}
    
    \printbibliography
    
\end{document}